\documentclass[12pt,reqno]{amsart}
 
\numberwithin{equation}{section}

\textwidth15.6cm
\textheight22.8cm
\voffset-.5truecm
\def\({\left(}
\def\){\right)}

%
%
\def\NZQ{\mathbb}               
\def\NN{{\NZQ N }}
\def\QQ{{\NZQ Q}}
\def\ZZ{{\NZQ Z}}
\def\RR{{\NZQ R}}
\def\CC{{\NZQ C}}
\def\opn#1#2{\def#1{\operatorname{#2}}} 

%
\opn\chara{char}


\opn\gr{gr} \opn\rank{rank}
%
%

%
%
\let\:=\colon

\newtheorem{theorem}{Theorem}[section]
\newtheorem{lemma}[theorem]{Lemma}

\newtheorem{proposition}[theorem]{Proposition}
\newtheorem{conjecture}[theorem]{Conjecture}
\theoremstyle{definition}
\newtheorem{remark}[theorem]{Remark}

\newtheorem{question}[theorem]{Question}

%


%

\textwidth=15cm \textheight=22cm \topmargin=0.5cm
\oddsidemargin=0.5cm \evensidemargin=0.5cm


%
\opn\ini{in} \opn\KRS{KRS}

\opn\krs{krs}
\opn\diag{diag}
\opn\DD{{\mathcal D}}
\opn\SS{{\mathcal S}}
\opn\MM{{\mathcal M}}
\opn\GL{GL}

\let\Bbb=\mathbb

\def\RR{{\Bbb R}}
\def\QQ{{\Bbb Q}}
\def\ZZ{{\Bbb Z}}
\def\NN{{\Bbb N}}
\def\ini{\operatorname{in}}

\opn\height{height}
\opn\length{length}
\opn\cl{cl}
\opn\Cl{Cl}
\opn\Grass{Grass}
\opn\sr{sr}
\opn\Hom{Hom}

\opn\Ker{Ker}
\opn\Im{Im}
\opn\supp{supp}

\opn\Spec{Spec}

\opn\SL{SL}
\opn\QF{QF}

\opn\GCD{gcd}

\def\qbinom#1#2{\left[\begin{matrix} #1 \\ #2\end{matrix}\right]}

\begin{document}

\title{Regular sequences of symmetric polynomials}
\author{ Aldo Conca, Christian Krattenthaler$^\dagger$,  Junzo Watanabe }
\address{Dipartimento di Matematica, Universit\`a di Genova, Via Dodecaneso, 35,
I-16146 Genova, Italy.
WWW: \tt http://www.dima.unige.it/\~{}conca/}

\address{Fakult\"at f\"ur Mathematik, Universit\"at Wien,
Nordbergstra{\ss}e~15, A-1090 Vienna, Austria.
WWW: \tt http://www.mat.univie.ac.at/\~{}kratt.}

\address{Department of Mathematics, Tokai University, 
Hiratsuka 259-1292, Japan}

\thanks{$^\dagger$Research partially supported 
by the Austrian Science Foundation FWF, grant S9607-N13,
in the framework of the National Research Network
``Analytic Combinatorics and Probabilistic Number Theory"}

\subjclass[2000]{Primary 05E05; Secondary 13P10 11C08}
\date{}
\keywords{symmetric polynomials, regular sequences}

\begin{abstract}
A set of $n$ homogeneous polynomials in $n$ variables is a regular sequence if the associated polynomial system has only the obvious solution $(0,0,\dots,\break 0)$.  Denote by  $p_k(n)$  the power sum symmetric polynomial in $n$ variables $x_1^k+x_2^k+\dots+x_n^k$. The  interpretation of the $q$-analogue of the binomial coefficient as Hilbert function leads us to discover that $n$ consecutive power sums in $n$ variables  form a regular sequence. We consider then the following problem: describe the subsets $A\subset \NN^*$ of cardinality $n$ such that the set of polynomials $p_a(n)$ with $a\in A$ is a regular sequence.  We prove that a necessary condition is that $n!$ divides the product of the degrees of the elements of $A$. To find an easily verifiable sufficient condition turns out to be surprisingly difficult already for $n=3$.  Given positive integers $a<b<c$ with $\GCD(a,b,c)=1$, we conjecture that $p_a(3),p_b(3),p_c(3)$ is a regular sequence if and only if  $abc\equiv0$~$(\text{mod }6)$.  We provide evidence for the conjecture by proving it in several special instances.
\end{abstract}
\maketitle
 
\section{Introduction} 
For $d,n\in \NN$ let $P_{d,n}(q)$ be  the $q$-analogue of the binomial coefficient (also called 
Gau{\ss}ian polynomial) 
\begin{equation}
P_{d,n}(q):=\qbinom{n+d}{n}_q=\frac{[n+d]_q!}{[n]_q!\, [d]_q!}
\end{equation}
where 
$$[j]_q!=\prod_{i=1}^{j}\frac { 1-q^i} {1-q}=\prod_{i=1}^{j} (1+q+\dots+q^{i-1}) .$$
It is well known that  $P_{d,n}(q)\in \ZZ[q]$. Furthermore  for a finite field $F$,   $P_{d,n}(q)$ evaluated at $q=\vert F\vert$ gives exactly the number of $F$-subspaces of $F^{n+d}$ of dimension $n$, \cite[1.3.18]{S}.  It is also known that  the coefficient of  $q^k$  in  $P_{d,n}(q)$ is  the number of partitions of $k$ with at most $n$ parts  and parts  bounded by $d$, see \cite[1.3.19]{S}. This interpretation leads to the following construction. 
Consider the polynomial ring $R=\CC[x_1, x_2,\dots, x_n]$ with the usual action of the permutation group $G=S_n$. The monomial complete intersection $I=(x_1 ^{d+1}, x_2^{d+1},\dots, x_n^{d+1})$  is fixed by $G$, so that $G$ acts as well  on the quotient ring $A=R/I$.  The  homogeneous part $A_k$ of degree $k$ of $A$  has a $\CC$-basis  consisting  of the monomials in $x_1^{a_1}x_2^{a_2}\cdots x_n^{a_n}$  with $a_1+a_2+\dots +a_n=k$ and $a_i \leq d$ for all $i$.  For a partition $\lambda: \lambda_1\geq \lambda_2\geq  \dots \geq \lambda_n$  denote by $m_\lambda$ the monomial symmetric polynomial  
$$m_\lambda= x_1^{ \lambda_1} x_2^{ \lambda_2}\cdots x_n^{ \lambda_n}+\text{symmetric terms},$$
where ``symmetric terms" means the sum of all terms that have to be added to complete the monomial
$ x_1^{ \lambda_1} x_2^{ \lambda_2}\cdots x_n^{ \lambda_n}$ to a symmetric polynomial in $x_1,x_2,\dots,x_n$. 
 The invariant ring  $A^G$ in degree $k$  has  a $\CC$-basis   consisting of the elements $m_\lambda$ where $\lambda$ is a partition of $k$ in at most $n$ parts with parts smaller than $d+1$. In other words, $P_{d,n}(q)$ is the Hilbert series of $A^G$. 
 In characteristic $0$ the extraction of  the invariant submodule is an exact functor. So we have $A^G=R^G/I^G$,  where $R^G=\CC[e_1,e_2,\dots,e_n]$ and  $e_i$ is  the elementary symmetric polynomial of degree $i$.  Since the  $e_i$'s are algebraically independent, the Hilbert series of $R^G$ is $1\big/\prod_{i=1}^n (1-q^i)$. 
Since we know that $P_{d,n}(q)$ is the Hilbert series of $A^G$, we see that $A^G$ has the Hilbert series of a complete intersection in $R^G$ defined by elements of degree $d+1,d+2,\dots, d+n$.     
These  considerations suggest  that the  ideal $I^G=I \cap R^G$  might be generated by 
$$p_{d+1}, p_{d+2},\dots, p_{d+n}$$
where $p_i=x_1^i+x_2^i+ \cdots+ x_n^i$ is the power sum of degree $i$ and that they form a regular sequence. 
Since the inclusion $(p_{d+1}, p_{d+2},\dots, p_{d+n})\subseteq I^G$ is obvious,  to prove the equality it is enough to prove that      $p_{d+1}, p_{d+2},\dots, p_{d+n}$  form  a  regular sequence in $R^G$ or, which is the same,  in $R$.  We will see that this is indeed the case, see Proposition~\ref{Newton}. We give two (simple) proofs of Proposition~\ref{Newton};  one is based on  Newton's relations and the other on  Vandermonde's determinant.  Denote by $h_i$ the complete symmetric polynomial of degree $i$, that is, the sum of all the monomials of degree $i$ in $x_1,\dots,x_n$. More generally,  we are led to consider regular sequences of symmetric polynomials. 
In particular regular sequences of  power sums $p_i$  and regular sequences of complete symmetric polynomials $h_i$.  Even for $n=3$ these  questions  turn  out to be difficult. For  indices $a,b,c$ with $\GCD(a,b,c)=1$ we conjecture that  $p_a,p_b$ and $p_c$ form a regular sequence exactly when $6\mid abc$.   
 We are able to verify this conjecture in a few cases in which the property under investigation is translated into the non-vanishing of a rational number which appears as a coefficient in the relevant expressions or on the irreducibility over the 
rationals of certain polynomials obtained by elimination,  see  Theorem~\ref{betterthannothing}, Proposition~\ref{generalize} and Remark~\ref{Eisen}. 
Other interesting algebraic aspects of  ideals and algebras of  power sums are investigated by  Lascoux and Pragacz   \cite{LP}  and by Dvornicich and Zannier \cite{DZ}. 
We thank Riccardo Biagioli, Francesco Brenti,  Alain Lascoux and Michael Filaseta   for useful suggestions and comments.

\section{Regular sequences of symmetric polynomials}
Set  $R_n=\CC[x_1,x_2,\dots,x_n]$.  
 Following Macdonald, we denote by $e_k(n), p_k(n)$  and $h_k(n)$ the elementary symmetric polynomial,    the power sum  and the complete symmetric polynomial  of degree $k$  in $n$-variables, respectively. When $n$ is clear from the context or irrelevant  we  will simply denote them by   $e_k, p_k$  and $h_k$, respectively.    For a subset $A\subset \NN^*$ we set 
 $$p_A(n)=\{  p_a(n) : a\in A\}\quad \mbox{ and }\quad  h_A(n)=\{  h_a(n) : a\in A\}.$$

\begin{question} 
\label{mainq} 
For which subsets $A\subset \NN^*$ with $|A|=n$ is the set of polynomials $p_A(n)$ (respectively $h_A(n)$) a regular sequence  in $R_n$?  That is,  when is $(0,\dots,0)$ the  only common zero of the equations  $p_a(n)=0$  (respectively $h_a(n)=0$), $a\in A$? 
\end{question}

In the following we list several auxiliary observations.

\begin{lemma} \label{onlyalemma} 
Let $A\subset \NN^*$ with $|A|=n$. Set $d=\GCD(A)$ and $A'=\{ a/d : a\in A\}$.  Then    $p_A(n)$ is a   regular sequence   if and only if  $p_{A'}(n)$ is a   regular sequence.  
\end{lemma}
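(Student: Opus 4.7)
The plan is to reduce the regular-sequence condition to a geometric condition on zero sets, and then observe that the substitution $x_i\mapsto x_i^d$ sets up the correspondence.

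First I would record the key identity: for any positive integer $k$ and any $d\ge 1$,
\[
p_{dk}(x_1,\dots,x_n)=x_1^{dk}+\cdots+x_n^{dk}=p_k(x_1^d,\dots,x_n^d).
\]
Thus, writing $A=\{da'_1,\dots,da'_n\}$ with $A'=\{a'_1,\dots,a'_n\}$, the polynomial $p_a(n)$ is obtained from the corresponding $p_{a/d}(n)$ by the substitution $y_i\mapsto x_i^d$.

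Next I would invoke the standard fact from commutative algebra that, since $R_n$ is a Cohen--Macaulay ring of dimension $n$, any $n$ homogeneous polynomials $f_1,\dots,f_n\in R_n$ form a regular sequence if and only if they form a homogeneous system of parameters, equivalently $\dim_\CC R_n/(f_1,\dots,f_n)<\infty$, equivalently (by the Nullstellensatz, since $\CC$ is algebraically closed) their only common zero in $\CC^n$ is the origin. This is precisely the reformulation already stated in Question~\ref{mainq}, so it suffices to prove that
\[
V\bigl(p_A(n)\bigr)=\{0\}\quad\Longleftrightarrow\quad V\bigl(p_{A'}(n)\bigr)=\{0\},
\]
where the varieties are taken in $\CC^n$.

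Finally, I would use the surjection $\varphi\colon\CC^n\to\CC^n$, $\varphi(x_1,\dots,x_n)=(x_1^d,\dots,x_n^d)$, whose surjectivity follows from $\CC$ being algebraically closed (every complex number has a $d$-th root) and which satisfies $\varphi^{-1}(0)=\{0\}$. The identity above gives $\varphi^{-1}\bigl(V(p_{A'}(n))\bigr)=V(p_A(n))$. Hence, if $V(p_{A'}(n))=\{0\}$ then $V(p_A(n))=\varphi^{-1}(\{0\})=\{0\}$; conversely, if $V(p_{A'}(n))$ contained a nonzero point $y$, choosing any preimage $x\in\varphi^{-1}(y)$ would yield a nonzero point of $V(p_A(n))$, since $\varphi(x)=y\ne0$ forces $x\ne0$. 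This establishes the desired equivalence.

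There is no serious obstacle here; the only point worth checking carefully is the equivalence between \emph{being a regular sequence} and \emph{having only the trivial common zero}, which relies on having exactly as many polynomials as variables and on working over an algebraically closed field. The same argument will apply verbatim to $h_A(n)$ only if an analogous identity for complete symmetric polynomials under the substitution $x_i\mapsto x_i^d$ were available, which it is not, so the statement is genuinely particular to power sums.
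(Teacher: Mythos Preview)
Your proof is correct and follows essentially the same approach as the paper: both arguments exploit the identity $p_{dk}(x_1,\dots,x_n)=p_k(x_1^d,\dots,x_n^d)$ together with the existence of $d$-th roots in $\CC$ to transfer nontrivial common zeros back and forth between $p_A(n)$ and $p_{A'}(n)$. You package this via the map $\varphi(x)=(x_1^d,\dots,x_n^d)$ and are more explicit about the equivalence of ``regular sequence'' with ``only trivial common zero,'' but the mathematical content is identical to the paper's two-sentence proof.
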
 

\begin{proof} Note that if  $(z_1,z_2,\dots,z_n)$ is a common zero of the $p_a(n)$'s with $a\in A$ then 
$(z_1^d,z_2^d,\dots,z_n^d)$ is a common  zero of  the $p_a(n)$'s with $a\in A'$ and that if $(z_1,z_2,\dots,z_n)$ is a common zero of the $p_a(n)$'s with $a\in A'$ then 
$(z_1^{1/d},z_2^{1/d},\dots,z_n^{1/d})$ is a common  zero of  the $p_a(n)$'s with $a\in A$ where $z_i^{1/d}$ denotes any $d$-th root of $z_i$. 
\end{proof} 

\begin{remark} For a set $A$ of cardinality $n$ the condition that $p_A(n)$ is  a   regular sequence  can be rephrased in terms of a resultant: the resultant of the set must be non-zero.  More directly, one can express the same fact  by imposing the condition that  the ideal generated by $p_A(n)$  contains all the forms of degree  
$$\left(\sum_{a\in A} a\right)-n+1.$$ 
This boils down to the evaluation of the rank  of a  $\{0,1\}$-matrix of  large size. However, we  have not been able  to compute the resultant or to evaluate the rank of the matrix efficiently. 
\end{remark}

\begin{lemma}  \label{p2}  
 Let $A\subset \NN^*$ with $|A|=n$. Set $d=\GCD(A)$ and $A'=\{ a/d : a\in A\}$. Then we have:
\begin{itemize}
\item[(a)]  The equations $p_a(2)=0$, $a\in A$, have a non-trivial common zero in $\CC^2$  if and only if the elements in $A'$ are all odd.   
\item[(b)] The equations $h_a(2)=0$, $a\in A$, have a non-trivial common zero in $\CC^2$ if and only if $\GCD(\{a+1 : a\in A\})\ne1$. 
\end{itemize}
 \end{lemma}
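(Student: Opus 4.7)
The plan is to treat each part by introducing the ratio $t=z_1/z_2$ of the coordinates of a putative non-trivial common zero, reducing the problem to a statement about roots of unity.

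For part~(a), first invoke Lemma~\ref{onlyalemma} to reduce to the case $\GCD(A)=1$, so that $A=A'$. Suppose $(z_1,z_2)\in\CC^2\setminus\{(0,0)\}$ satisfies $z_1^a+z_2^a=0$ for every $a\in A$. One checks immediately that $z_1=0$ forces $z_2=0$ and conversely, so both coordinates are non-zero. Setting $t=z_1/z_2$, the equations become $t^a=-1$ for all $a\in A$. Squaring yields $t^{2a}=1$, so the multiplicative order of $t$ divides
\[
\gcd\{2a:a\in A\}=2\GCD(A)=2.
\]
The option $t=1$ is incompatible with $t^a=-1$, leaving $t=-1$; then $(-1)^a=-1$ for every $a\in A$ is precisely the assertion that every element of $A$ is odd. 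Conversely, if every element of $A=A'$ is odd, then $(1,-1)$ is a non-trivial common zero.

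For part~(b), exploit the closed forms
\[
h_a(x,y)=\frac{x^{a+1}-y^{a+1}}{x-y}\quad(x\ne y),\qquad h_a(x,x)=(a+1)x^a.
\]
Any non-trivial common zero $(z_1,z_2)$ must satisfy $z_1\ne z_2$ (otherwise $(a+1)z_1^a=0$ in characteristic zero would force $z_1=z_2=0$), and neither coordinate can vanish. Setting $s=z_1/z_2\ne1$, the system collapses to $s^{a+1}=1$ for every $a\in A$, so the order of $s$ divides $d:=\GCD\{a+1:a\in A\}$. Such an $s\ne1$ exists precisely when $d>1$; in that case any primitive $d$-th root of unity $\zeta$ yields the non-trivial zero $(\zeta,1)$, while if $d=1$ the only solution is $s=1$, giving no non-trivial zero.

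There is no serious obstacle: the only subtle point is the diagonal branch $x=y$ in~(b), which is not captured by the quotient formula and must be excluded separately via $h_a(x,x)=(a+1)x^a$ and the characteristic zero hypothesis. Once that branch is disposed of, both halves of the lemma reduce to elementary cyclotomic bookkeeping on the single variable $t=z_1/z_2$.
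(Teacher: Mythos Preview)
Your proof is correct and follows essentially the same route as the paper: pass to the ratio $t=z_1/z_2$, obtain $t^a=-1$ in~(a) and $s^{a+1}=1$ in~(b), and finish by elementary divisibility. The only cosmetic differences are that in~(a) the paper invokes a B\'ezout combination $\sum j_ka_k=1$ to force $t=\pm1$ whereas you square first, and in~(b) you dispose of the diagonal $z_1=z_2$ and the vanishing-coordinate cases more explicitly than the paper does. Note incidentally that both your argument and the paper's proof actually yield the condition $\GCD\{a+1:a\in A\}>1$ for the existence of a non-trivial zero, not $=1$ as the lemma is stated; this is a typo in the statement, as confirmed by Lemma~\ref{cosule}.
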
 
 
 \begin{proof} (a)  As in the proof of the lemma above, the polynomials  $p_a(2)$ with $a\in A$ have a non-trivial common zero if and only if 
 the polynomials $p_a(2)$ with $a\in A'$ have a non-trivial common zero. So we may assume that $A=A'$. If all the elements of $A$ are odd then $(1,-1)$ is a non-trivial common zero. Conversely, if there is a non-trivial common zero, then we may assume this zero to be $(x,1)$. In this case, we have $x^a=-1$ for all $a\in A$.   Since $\GCD(A)=1$ we can find a linear combination of the elements in 
 $A=\{a_1,a_2,\dots, a_n\}$ of the type $\sum  j_k a_k=1$ with $j_k\in \ZZ$. Hence we have $x^{j_k a_k}=(-1)^{j_k}$ and thus $x=(-1)^{\sum j_k}$.  Therefore we have either $x=1$,  which is impossible,  or $x=-1$ which implies that all the elements of  $A$ are odd. 
 
 (b)  Denote by (*) the condition: the equations $h_a(2)=0$ with $a\in A$ have a non-trivial common zero in $\CC^2$. So (*) holds  if and only if the equations  have a common zero of type $(c,1)$.  Obviously $c\neq  1$, 
so we can multiply each $h_a(2)$ by $c-1$.  We obtain that (*) holds if and only if  the equations $x^{a+1}-1=0$  with $a\in A$ have a common root $\neq 1$. In other words, $\GCD(\{x^{a+1}-1 : a\in A\})\neq x-1$. But  $\GCD(\{x^{a+1}-1 : a\in A\})=x^t-1$ 
 with $t=\GCD(\{a+1 : a\in A\})$, and we obtain the desired characterization. 
\end{proof}

This answers Question~\ref{mainq} for $n=2$. Namely, Lemma~\ref{p2} implies the following fact. 

\begin{lemma} 
\label{cosule}
Let $a,b\in \NN\setminus\{0\}$ and  $d=\GCD(a,b)$. Then we have:
\begin{itemize}
\item[(1)] $(p_a(2),p_b(2))$ is a   regular sequence if and only if 
$a/d$ or  $b/d$  is even. 
\item[(2)]  $(h_a(2),h_b(2))$ is a   regular sequence if and only if 
$\GCD(a+1,b+1)=1$.   
\end{itemize}
\end{lemma}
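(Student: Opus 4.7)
The plan is to reduce the regular-sequence condition to the geometric condition of having no non-trivial common zero, which is exactly what Lemma~\ref{p2} addresses.

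First I would record the following standard fact: for two nonzero homogeneous polynomials $f,g\in R_2=\CC[x_1,x_2]$, the sequence $(f,g)$ is regular in $R_2$ if and only if $f$ and $g$ have no common zero other than $(0,0)$. Indeed, $R_2$ is a Cohen--Macaulay graded ring of dimension $2$, so a homogeneous sequence is regular exactly when the ideal it generates has height equal to its length; equivalently (by the Nullstellensatz and the fact that $R_2$ is a UFD of dimension $2$), when $f$ and $g$ share no nonconstant common factor, which, since both are homogeneous in two variables, is the same as saying that their only common zero in $\CC^2$ is the origin. Since $p_a(2)$ and $h_a(2)$ are nonzero homogeneous polynomials of positive degree for every $a\geq 1$, this reformulation applies to both sequences under consideration.

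Given that reformulation, part (1) is immediate from Lemma~\ref{p2}(a): the pair $(p_a(2),p_b(2))$ fails to be regular exactly when a non-trivial common zero exists, which occurs iff every element of $A'=\{a/d,\,b/d\}$ is odd; the negation of this is precisely that $a/d$ or $b/d$ is even. Similarly, part (2) follows from Lemma~\ref{p2}(b): $(h_a(2),h_b(2))$ is regular iff the system $h_a(2)=h_b(2)=0$ has only the trivial solution, which by Lemma~\ref{p2}(b) is equivalent to $\GCD(a+1,b+1)=1$. No real obstacle arises here, as the two parts of the statement are simply the contrapositive reformulations of Lemma~\ref{p2} once the regular-sequence/common-zero dictionary in $R_2$ is in place.
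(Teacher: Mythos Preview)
Your proposal is correct and follows essentially the same route as the paper: the paper simply states that Lemma~\ref{cosule} is an immediate consequence of Lemma~\ref{p2}, and you make that deduction explicit by spelling out the standard equivalence in $R_2$ between ``$(f,g)$ is a regular sequence'' and ``$(0,0)$ is the only common zero of $f,g$'' (an equivalence the paper already builds into its phrasing of Question~\ref{mainq}), and then reading off each part from the corresponding part of Lemma~\ref{p2}.
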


 Here is another fact.

 \begin{lemma} \label{rootsof1}   
 \begin{itemize} 
\item[(1)]  If $a\not\equiv 0$  $(\text{\em mod }c)$ and $u$ is a  primitive $c$-th root of unity then 
 $$(1,u,u^2,\dots,u^{c-1},0,\dots,0)\in \CC^n$$
  is a zero of  $p_a(n)$ in $\CC^n$ for all $n\geq c$. 
\item[(2)]  Let  $A\subset \NN^*$ with $|A|=n$ and  $1\leq c\leq n$. 
Set $\beta_c=|\{ a \in A :  a\equiv0$~$(\text{\em mod }c)\}|$. If  $\beta_c < \lfloor n/c \rfloor$   then  $p_A(n)$ is not a   regular sequence.  Here $\lfloor x \rfloor=\max\{ m\in \ZZ : m\leq x\}$ is the
standard floor function. 
\end{itemize} 
 \end{lemma}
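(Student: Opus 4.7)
For part~(1), I would just compute: evaluating $p_a(n)$ at the given point, since the last $n-c$ coordinates are zero, only the first $c$ contribute, giving
\[
\sum_{i=0}^{c-1} (u^i)^a = \sum_{i=0}^{c-1} (u^a)^i.
\]
Because $u$ is a primitive $c$-th root of unity and $a \not\equiv 0 \pmod{c}$, the element $u^a$ is a $c$-th root of unity different from $1$, so the geometric sum is $\frac{(u^a)^c-1}{u^a-1}=0$. This is a one-line calculation.

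For part~(2), the plan is to exhibit a non-trivial common zero of the $p_a(n)$, $a \in A$, which by definition prevents $p_A(n)$ from being a regular sequence. Set $q=\lfloor n/c\rfloor$, let $u$ be a primitive $c$-th root of unity, and consider points of the form
\[
(\lambda_1,\lambda_1 u,\dots,\lambda_1 u^{c-1},\;\lambda_2,\lambda_2 u,\dots,\lambda_2 u^{c-1},\;\dots,\;\lambda_q,\dots,\lambda_q u^{c-1},\;0,\dots,0),
\]
depending on the parameters $\lambda_1,\dots,\lambda_q \in \CC$ (with $n-cq$ trailing zeros). Evaluating $p_a(n)$ at such a point gives $\bigl(\sum_{j=1}^q \lambda_j^a\bigr)\bigl(\sum_{i=0}^{c-1}(u^a)^i\bigr)$. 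By the argument in part~(1) this automatically vanishes whenever $a\not\equiv 0 \pmod{c}$, so we only have to impose conditions corresponding to the $\beta_c$ elements $a\in A$ with $a\equiv 0\pmod{c}$; for those, $u^a=1$ and the condition reduces to $\sum_{j=1}^q \lambda_j^a = 0$.

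Thus the common zero locus of $p_A(n)$ contains the image of the affine variety in $\CC^q$ cut out by the $\beta_c$ homogeneous equations $\sum_{j=1}^q \lambda_j^{a}=0$, $a\in A$, $a\equiv 0\pmod c$. Since these are $\beta_c < q$ homogeneous equations in $q$ unknowns, Krull's height theorem guarantees that this variety has positive dimension, so it contains points $(\lambda_1,\dots,\lambda_q)\neq (0,\dots,0)$. Any such choice yields a non-trivial common zero of $p_A(n)$ in $\CC^n$, which is what we needed. The only mild obstacle is checking that the lift to $\CC^n$ is genuinely non-trivial, but this is immediate: if some $\lambda_j\ne 0$ then the coordinate $\lambda_j$ itself in the listed point is non-zero.
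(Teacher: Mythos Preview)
Your proof is correct and follows essentially the same approach as the paper: the same geometric-sum computation for part~(1), and for part~(2) the same construction of points built from $q=\lfloor n/c\rfloor$ scaled copies of $(1,u,\dots,u^{c-1})$ padded with zeros, reducing the problem to $\beta_c<q$ homogeneous equations in $q$ unknowns. The only cosmetic difference is that you invoke Krull's height theorem explicitly where the paper simply appeals to the existence of a non-zero common solution.
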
 
 
 \begin{proof} (1) Evaluating $p_a(n)$ at $(1,u,u^2,\dots,u^{c-1},0,\dots,0)$ yields $\sum_{i=1}^c u^{(i-1)a}$. Multiplying the sum by $u^a-1$, we obtain $u^{ca}-1$ which is $0$. Since $u$ is a primitive $c$-th root of unity and $a\not\equiv 0$~$(\text{mod }c)$, we have $u^a-1\neq 0$. It follows that $\sum_{i=1}^c u^{(i-1)a}=0$. 
 
 (2) Write $n=qc+r$ with $0\leq r<c$ so that $q=\lfloor n/c\rfloor$. Let $u$ be a primitive $c$-th root of unity. Let $(y_1,y_2,\dots,y_q)\in \CC^q$ and consider the element $\tilde y\in \CC^n$ obtained by concatenating $y_i(1,u,\dots,u^{c-1})$ for $i=1,2,\dots,q$ and by adding the zero vector of length $r$ at the end. By (1) we know that every such $\tilde y$ is a zero of the $p_a(n)$'s with $a\not\equiv 0$~$(\text{mod }c)$. If we also impose that $\tilde y$ is a zero of the $p_a(n)$'s with $a\in A$ and $a\equiv0$~$(\text{mod }c)$ we obtain $\beta_c$  homogeneous equations in $q$ variables. If $\beta_c<q$ then there exists a non-zero common solution to that system of equations. 
 \end{proof} 
 
We list another auxiliary result. 
 
 \begin{lemma} \label{hroots}  
 \begin{itemize} 
 \item[(1)]  Let $c>2$ and  $a+2\equiv 0$ or $1$~$(\text{\em mod }c)$. Let $u_1,u_2,u_3$ be distinct $c$-th roots of unity.   Then $p=(u_1,u_2,u_3,0,\dots, 0)\in \CC^n $ is a zero of  $h_a(n)$ in $\CC^n$ for all $n\geq 3$. 
\item[(2)]  Let  $A\subset \NN^*$ with $|A|=n$ and  $2<c$.   Assume that for all $a\in A$ one has $a+2\equiv 0$ or $1$~$(\text{\em mod }c)$. Then  $h_A(n)$ is not a   regular sequence.   \end{itemize} 
 \end{lemma}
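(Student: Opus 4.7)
The plan is to establish (1) by an explicit evaluation of $h_a(u_1,u_2,u_3)$ via generating functions, and then to deduce (2) from (1) in one line via specialization.

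For (1), start from
$$
\sum_{a\ge 0} h_a(x_1,x_2,x_3)\,t^a=\frac{1}{(1-x_1 t)(1-x_2 t)(1-x_3 t)}.
$$
Partial-fraction expansion (valid because $u_1,u_2,u_3$ are distinct) gives the closed form
$$
h_a(x_1,x_2,x_3)=\sum_{i=1}^{3}\frac{x_i^{a+2}}{\prod_{j\ne i}(x_i-x_j)}.
$$
Specialize to $x_i=u_i$ and use $u_i^c=1$: the power $u_i^{a+2}$ depends only on the residue class of $a+2$ modulo $c$, and the hypothesis forces this power to equal $1$ for every $i$ (if $a+2\equiv 0~(\text{mod }c)$) or $u_i$ for every $i$ (if $a+2\equiv 1~(\text{mod }c)$). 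Consequently $h_a(u_1,u_2,u_3)$ collapses to one of
$$
\sum_{i=1}^{3}\frac{1}{\prod_{j\ne i}(u_i-u_j)} \qquad\text{or}\qquad \sum_{i=1}^{3}\frac{u_i}{\prod_{j\ne i}(u_i-u_j)},
$$
and both sums vanish by Lagrange interpolation applied to $f(x)=x^k$ at the three distinct nodes $u_1,u_2,u_3$: the interpolating polynomial has degree at most two and must equal $x^k$ for $k\le 2$, so the leading ($x^2$) coefficient $\sum_i u_i^k/\prod_{j\ne i}(u_i-u_j)$ is $1$ for $k=2$ and $0$ for $k\in\{0,1\}$. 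Extending from three to $n\ge 3$ variables is trivial: setting the last $n-3$ coordinates to zero annihilates every monomial in $h_a(x_1,\dots,x_n)$ touching those slots, so $h_a(u_1,u_2,u_3,0,\dots,0)=h_a(u_1,u_2,u_3)=0$.

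Part (2) is then an immediate corollary. Since $c>2$, the group of $c$-th roots of unity has at least three elements, so one can choose $u_1,u_2,u_3$ distinct; the point $(u_1,u_2,u_3,0,\dots,0)\in\CC^n$ is nonzero and, by (1), is a common zero of every $h_a(n)$ for $a\in A$. Therefore the affine variety cut out by $h_A(n)$ is strictly larger than $\{0\}$, so $h_A(n)$ is not a regular sequence (here one implicitly assumes $n\ge 3$ so that the construction fits in $\CC^n$).

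I do not expect a serious obstacle. The only mildly clever step is recognising that the two hypothesis classes $a+2\equiv 0,1~(\text{mod }c)$ are exactly the exponents $k=0,1$ for which the Lagrange-type sum $\sum_i u_i^k/\prod_{j\ne i}(u_i-u_j)$ vanishes; everything else is routine manipulation of generating functions.
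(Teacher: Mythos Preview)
Your proof is correct and is essentially the same as the paper's. The paper multiplies $h_a(3)$ by the Vandermonde product $\Delta=(x_1-x_2)(x_1-x_3)(x_2-x_3)$ to obtain $h_a(3)\Delta=x_1^{a+2}(x_2-x_3)+x_2^{a+2}(x_3-x_1)+x_3^{a+2}(x_1-x_2)$ and then observes directly that this vanishes when each $x_i^{a+2}$ is replaced by $1$ or by $x_i$; your partial-fraction identity is exactly this formula divided by $\Delta$, and your Lagrange-interpolation argument is a slightly more formal way of making the same vanishing observation.
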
 
 
 \begin{proof} (1) We may assume $n=3$. To show that $h_a(3)$ evaluated at  $p=(u_1,u_2,u_3)$ is $0$ we first multiply $h_a(3)$ with $\Delta=(x_1-x_2)(x_1-x_3)(x_2-x_3)$. A simple calculation shows that 
$$h_a(3)\Delta=x_1^{a+2}(x_2-x_3)+x_2^{a+2}(x_3-x_1)+x_3^{a+2}(x_1-x_2).$$
Since, by assumption, we have $u_i^{a+2}=1$ or $u_i^{a+2}=u_i$ for all $i$,
this expression vanishes at  $p$.  
  
 (2) It follows immediately from (1) that $p$ is a common zero of the equations $h_a(n)$. So $h_A(n)$ is not a   regular sequence.  \end{proof} 
    
For the general case of $n$ homogeneous symmetric polynomials in $R_n$, there holds the following criterion.
    
 \begin{lemma}\label{facto}  Let $f_1,f_2,\dots,f_n$ be a   regular sequence of homogeneous symmetric polynomials in $R_n$.  Then $n!$ divides $(\deg f_1)(\deg f_2)\cdots (\deg f_n)$. \end{lemma}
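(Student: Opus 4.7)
The plan is to exploit two well-known facts: (i) symmetric polynomials lie in the invariant ring $R_n^{S_n}$, which is itself a polynomial ring $\CC[e_1,\dots,e_n]$ with $\deg e_j=j$; and (ii) $R_n$ is a \emph{free} module of rank $n!$ over $R_n^{S_n}$ (by Chevalley--Shephard--Todd, or directly via the classical coinvariant basis). Given these, the statement will reduce to a one-line B\'ezout count.

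First I would observe that the regular sequence $f_1,\dots,f_n$ is automatically a homogeneous system of parameters in $R_n$, so that $A:=R_n/(f_1,\dots,f_n)$ is a graded Artinian complete intersection. A standard Hilbert series (or B\'ezout) computation then gives its total $\CC$-dimension as
$$\dim_\CC A \;=\; \prod_{i=1}^n \deg f_i.$$
On the other hand, since each $f_i$ already lies in $R_n^{S_n}$, base change along the inclusion $R_n^{S_n}\hookrightarrow R_n$ will yield
$$A \;\cong\; \bigl(R_n^{S_n}/(f_1,\dots,f_n)\bigr) \otimes_{R_n^{S_n}} R_n,$$
and combined with the freeness of $R_n$ over $R_n^{S_n}$ of rank $n!$ this gives
$$\dim_\CC A \;=\; n!\cdot \dim_\CC \bigl(R_n^{S_n}/(f_1,\dots,f_n)\bigr).$$
Comparing the two expressions immediately forces $n!\mid \prod_{i=1}^n \deg f_i$.

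The only point requiring care will be the freeness of $R_n$ as an $R_n^{S_n}$-module; this is the substantive input, but it is a well-documented classical fact, so no genuine obstacle remains once it is invoked. As a sanity check, one may also note that the Hilbert series of $R_n^{S_n}/(f_1,\dots,f_n)$ equals $\prod_{i=1}^n(1-q^{\deg f_i})\big/\prod_{j=1}^n(1-q^j)$, whose value at $q=1$ is $\prod_{i=1}^n \deg f_i\big/n!$; this must be a non-negative integer, in agreement with the above.
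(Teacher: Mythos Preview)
Your proof is correct. The paper's argument is in fact precisely what you record as a ``sanity check'': it passes to the invariant ring $\CC[e_1,\dots,e_n]$, observes that $f_1,\dots,f_n$ remain a regular sequence there, writes down the Hilbert series $\prod_i(1-q^{\deg f_i})\big/\prod_j(1-q^j)$ of the quotient, and evaluates at $q=1$.

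Your primary argument is a mild repackaging of the same content: instead of reading off the integer $\prod_i\deg f_i/n!$ as the value of a Hilbert polynomial at $q=1$, you obtain it as $\dim_\CC\bigl(R_n^{S_n}/(f_1,\dots,f_n)\bigr)$ via the freeness of $R_n$ over $R_n^{S_n}$ of rank $n!$. The two routes are essentially equivalent---the rank-$n!$ freeness is exactly what the ratio $\prod_j(1-q^j)/(1-q)^n$ encodes---but your version has the minor advantage of making the integer in question visibly a vector-space dimension rather than a limit, while the paper's version avoids invoking Chevalley--Shephard--Todd explicitly. Either way, the only substantive step is that the $f_i$ form a regular sequence already in $R_n^{S_n}$, which both arguments need (the paper dismisses it as ``obvious''; your base-change isomorphism together with faithful flatness handles it cleanly).
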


\begin{proof}   For obvious reasons,  $f_1,f_2,\dots,f_n$  is also a   regular sequence in the ring of symmetric polynomials $\CC[e_1,e_2,\dots,e_n]$. Then the Hilbert series of 
$$\CC[e_1,e_2,\dots,e_n]/(f_1,f_2,\dots,f_n)$$ 
is $\prod_{i=1}^n (1-q^{\deg f_i})/ (1-q^i)$, and it must be a polynomial with integral coefficients. 
If we take the limit $q\to1$, we obtain $\big( \prod_{i=1}^n \deg f_i\big) \big/n! $, which must be an integer.   \end{proof} 
 
Next we prove that power sums, respectively complete symmetric polynomials, with consecutive indices form regular sequences.
 
 \begin{proposition} \label{Newton}
Let $A\subset \NN^*$  be a set of $n$ consecutive  elements.  Then both $p_A(n)$ and $h_A(n)$  are regular sequences  in $R_n$. 
 \end{proposition}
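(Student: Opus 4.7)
The plan is to prove both statements by showing that, writing $A = \{d+1, d+2, \ldots, d+n\}$, the only common complex zero of $p_A(n)$ (respectively $h_A(n)$) is the origin — which for a sequence of $n$ homogeneous polynomials in $R_n$ is equivalent to forming a regular sequence.

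For the power sum case, suppose $(z_1, \ldots, z_n) \in \CC^n$ is a common zero of $p_{d+1}(n), p_{d+2}(n), \ldots, p_{d+n}(n)$. I would collect the nonzero entries among the $z_i$ into their distinct values $w_1, \ldots, w_r$, with respective multiplicities $m_s \geq 1$, so that $p_k(z) = \sum_{s=1}^r m_s w_s^k$. Setting $c_s := m_s w_s^{d+1}$, the hypothesis translates into
$$\sum_{s=1}^r c_s w_s^{j-1} = 0 \quad\text{for } j = 1, 2, \ldots, n.$$
The first $r$ of these equations form a Vandermonde system in the distinct nonzero $w_s$, hence have invertible coefficient matrix (determinant $\prod_{i<k}(w_k - w_i)$); this forces $c_s = 0$ for every $s$, which is impossible since $m_s \geq 1$ and $w_s \neq 0$ unless $r = 0$. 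Hence every $z_i$ vanishes.

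For the complete symmetric polynomial case, I would exploit the identity
$$\left(\sum_{k \geq 0} h_k t^k\right)\left(\sum_{k \geq 0} (-1)^k e_k t^k\right) = 1,$$
which follows from $\prod_i (1 - x_i t)^{-1} \cdot \prod_i (1 - x_i t) = 1$. Its coefficient-wise consequence is the recurrence
$$h_k = e_1 h_{k-1} - e_2 h_{k-2} + \cdots + (-1)^{n-1} e_n h_{k-n} \quad (k > n).$$
Given a common zero $(z_1, \ldots, z_n)$ of $h_{d+1}(n), \ldots, h_{d+n}(n)$, the inequality $d + n + 1 > n$ allows us to apply the recurrence at $k = d+n+1$; it expresses $h_{d+n+1}(z)$ as a combination of the $h_{d+j}(z)$, $j = 1, \ldots, n$, which all vanish. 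Iterating, $h_k(z) = 0$ for every $k > d$. Therefore $\sum_{k \geq 0} h_k(z) t^k$ is a polynomial in $t$. But this power series also equals $\prod_{i=1}^n (1 - z_i t)^{-1}$, so its reciprocal $\prod_i (1 - z_i t)$ must divide $1$ in $\CC[t]$, hence be a nonzero constant; since its constant term is $1$, one concludes $\prod_i (1 - z_i t) = 1$, i.e., $z_i = 0$ for all $i$.

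The common thread in both arguments is a rigidity principle: $n$ consecutive vanishings in the sequences $(p_k(z))$ or $(h_k(z))$ are enough to force every root to be zero — by direct Vandermonde linear algebra in the first case, and by the absence of poles in the rational function $\prod_i (1 - z_i t)^{-1}$ in the second. I do not foresee a serious obstacle; the only point needing care is to check that the $h$-recurrence really propagates the vanishing forward from the $n$ given values, which is automatic because $d \geq 0$ gives $d + n + 1 > n$.
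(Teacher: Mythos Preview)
Your power-sum argument is essentially the paper's second proof: collecting the distinct (nonzero) values among the $z_i$ and using the Vandermonde determinant to kill the vector $(m_s w_s^{d+1})_s$. The only cosmetic difference is that the paper keeps $0$ among the distinct values and concludes $c=1$, $z_1=0$, whereas you discard the zero coordinates up front and conclude $r=0$; both are fine.

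Your argument for the $h_k$ is correct but genuinely different from either route the paper indicates. The paper adapts its inductive Proof~(1), using the identity $\sum_{k=0}^n(-1)^k e_{n-k}h_{k+h}=0$ to push the vanishing \emph{backward} to $h_{a-1}$ (or to peel off a variable via $e_n$), and alternatively observes that the initial ideal of $(h_a,\dots,h_{a+n-1})$ for $x_1>\dots>x_n$ contains $(x_1^a,\dots,x_n^{a+n-1})$. You instead push the recurrence \emph{forward} to get $h_k(z)=0$ for all $k>d$, so that the generating series $\sum_k h_k(z)t^k=\prod_i(1-z_it)^{-1}$ is a polynomial, forcing $\prod_i(1-z_it)=1$. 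Your approach avoids induction and Gr\"obner bases entirely and gives a clean one-shot reason (no poles in a rational function that should have them); the paper's initial-ideal proof, on the other hand, yields the extra information that $(x_1^a,x_2^{a+1},\dots,x_n^{a+n-1})$ is the initial ideal, which your method does not see.
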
 
 
 \begin{proof}   We present  two proofs for 
power sums and (sketch)  two proofs for complete symmetric polynomials. Let $a$ be the minimum of $A$.  We first prove the assertion for the power sums.  

Proof (1) for power sums: We argue by induction on $a$ and $n$.  If $n=1$ then  the assertion is obvious. If $a=1$, then any common zero of $p_1,p_2,\dots,p_n$ is also a common zero of $e_1,e_2,\dots,e_n$ because $p_1,p_2,\dots,p_n$ also generate the algebra  of symmetric polynomials.  But, obviously,  the only common zero of the elementary symmetric polynomials is $(0,\dots,0)$.  
 
Now assume $n>1$ and $a>1$.  For all $h\in \NN$ we have Newton's identity 
$$\sum_{k=0}^n (-1)^k e_{n-k} p_{k+h}=0,$$
with the convention that $p_0=n$ and $e_0=1$.
For $h=a-1$ we have 
$$e_n p_{a-1}=\sum_{k=1}^n (-1)^{k+1} e_{n-k} p_{k+a-1}.$$
If $z=(z_1,z_2,\dots,z_n)$ is a common zero  of $p_a,p_{a+1},\dots,p_{a+n-1}$ then it is also a  zero of $e_n p_{a-1}$. So $z$ is either a zero of $p_{a-1}$, and we conclude by induction on $a$ that $z=(0,\dots,0)$ or $z$ is  a zero of $e_n$. In the second case, one of the coordinates of $z$ is  $0$ and we conclude by induction on $n$ that $z=(0,\dots,0)$. 

Proof (2)  for power sums: Let 
$z=(z_1,z_2,\dots,z_n)\in \CC^n$ be a solution of the polynomial system associated to $p_A(n)$. We have to prove that $z=0$. 
Let $c$ be the cardinality of the set $\{ z_i : i=1,\dots,n\}$.  We may assume that 
$z_1,z_2,\dots,z_c$ 
are  the distinct values among $z_1,z_2,\dots,z_n$, 
and for $i=1,\dots,c$  set 
$m_i=\vert\{ j : z_j=z_i\}\vert$, 
so that $m_i>0$. Hence $p_k(z_1,\dots,z_n)=m_1z_1^k+\dots+m_cz_c^k$. 
By assumption, $p_{a+j}(z_1,\dots,z_n)=0$ for $j=0,\dots, n-1$.  Let  $V$ be the Vandermonde matrix 
 $(z_j^i)$ with $i=0,\dots,c-1$ and $j=1,\dots,c$. By construction,   $\det V\neq 0$ and the column vector $(m_1z_1^a, m_2z_2^a, \dots,  m_c z_c^a)$ is in the kernel of $V$.  It follows that $(m_1z_1^a, m_2z_2^a, \dots,  m_cz_c^a)=0$, that is, $c=1$ and $z_1=0$. This shows that $z=0$. 

Proof (1)  works  also for complete symmetric polynomials by  replacing Newton's identity with the corresponding identity for the complete symmetric polynomials.  For a  second proof for complete symmetric polynomials, one observes  that the initial ideal  with respect to any term order with $x_1>x_2>\dots>x_n$  of the ideal generated by  $h_a(n), h_{a+1}(n), \dots, h_{a+n-1}(n)$ contains (and hence  is equal to) $(x_1^a,x_2^{a+1}, \dots, x_n^{a+n-1})$. 
\end{proof} 

 The  above results (and  many computer calculations)  suggest the following conjecture.
 
 \begin{conjecture} \label{con3} 
  Let $A=\{a,b,c\}$, where $a,b,c$ are positive integers with $a<b<c$ and $\GCD(A)=1$.  Then  $p_A(3)$ is a regular sequence in $R_3$  if and only if $abc\equiv0$~$(\text{\em mod }6)$. 
 \end{conjecture}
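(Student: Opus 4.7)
The plan is to prove the two directions separately. The ``only if'' direction follows immediately from the previously established lemmas. Suppose $abc\not\equiv0\pmod 6$. Then either $abc$ is odd, so all of $a,b,c$ are odd, or $3$ divides none of $a,b,c$. In the first case, Lemma~\ref{rootsof1}(2) applied with $c=2$ yields $\beta_2=0<1=\lfloor 3/2\rfloor$; in the second case, the same lemma with $c=3$ yields $\beta_3=0<1=\lfloor 3/3\rfloor$. In either scenario $p_A(3)$ is not a regular sequence, with explicit non-trivial common zeros $(1,-1,0)$ and $(1,\omega,\omega^2)$ respectively.

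For the ``if'' direction, assume $6\mid abc$ and suppose for contradiction that $z=(z_1,z_2,z_3)\neq(0,0,0)$ is a common zero of $p_a(3),p_b(3),p_c(3)$. I would first dispose of the degenerate configurations. If some $z_i=0$, then the remaining two coordinates constitute a non-trivial common zero of $p_a(2),p_b(2),p_c(2)$; by Lemma~\ref{p2}(a) and the hypothesis $\GCD(a,b,c)=1$, this forces $a,b,c$ all odd, contradicting $2\mid abc$. If two coordinates coincide and all are nonzero, say $z_1=z_2=z$ and $z_3=w$, then setting $\zeta:=w/z$ gives $\zeta^a=\zeta^b=\zeta^c=-2$, so $\zeta^{b-a}=1$ forces $|\zeta|=1$ while $|\zeta^a|=2$, a contradiction. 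The subcases in which two coordinates vanish are trivial.

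What remains is the generic case: $z_1,z_2,z_3$ pairwise distinct and all nonzero. Normalizing $z_3=1$ by homogeneity, the task reduces to showing that the system
\begin{equation*}
u^a+v^a+1=0,\quad u^b+v^b+1=0,\quad u^c+v^c+1=0
\end{equation*}
has no solution $(u,v)\in(\CC\setminus\{0\})^2$ with $u,v,1$ pairwise distinct, whenever $6\mid abc$. My approach would be to eliminate $v$ between the first two equations to produce a one-variable polynomial $R_{a,b}(u)\in\ZZ[u]$ whose roots parametrize the candidate values of $u$, and then to show that the evaluation of the third power sum at any such root is nonzero under the hypothesis $6\mid abc$. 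Concretely this reduces the conjecture to the non-vanishing of a specific algebraic number, or equivalently to an irreducibility statement for a polynomial in $\QQ[u]$, which is precisely the kind of strategy applied in the partial results Theorem~\ref{betterthannothing}, Proposition~\ref{generalize} and Remark~\ref{Eisen}.

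The main obstacle is uniformity: the eliminant $R_{a,b}(u)$ grows in both degree and coefficient size with $a$ and $b$, and the divisibility hypothesis $6\mid abc$ --- a combined statement about the $2$-adic and $3$-adic valuations of the triple $(a,b,c)$ --- does not translate into any evident structural feature of $R_{a,b}$. A natural line of attack is to split into subcases according to which of $\{2,3\}$ divides which of $a,b,c$, apply Eisenstein-type criteria at the primes $2$ and $3$ to $R_{a,b}$ or to an associated resultant, and combine this with local arguments to control the value of $u^c+v^c+1$ at the roots. As the authors indicate, however, even this attack succeeds in only a few parameter families, which is why the statement is posed here as a conjecture rather than as a theorem.
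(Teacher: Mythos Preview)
This statement is posed in the paper as a \emph{conjecture}, not a theorem; the paper proves only the ``only if'' direction and then offers partial evidence (Theorem~\ref{betterthannothing}, Proposition~\ref{generalize}, Remark~\ref{Eisen}) for the ``if'' direction. Your proposal is therefore not being measured against a full proof, and you correctly recognise this in your final paragraph.

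For the ``only if'' direction, the paper simply invokes Lemma~\ref{facto} (the $n!$-divisibility criterion). You instead use Lemma~\ref{rootsof1}(2) with $c=2$ and $c=3$, which is an equally valid and slightly more explicit route, producing the concrete witnesses $(1,-1,0)$ and $(1,\omega,\omega^2)$. Both arguments are short and correct.

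Your reductions in the ``if'' direction --- ruling out a zero coordinate via Lemma~\ref{p2}(a), and ruling out a repeated coordinate by the modulus argument on $\zeta=w/z$ --- are correct and in the same spirit as the paper's own manoeuvres (compare Lemma~\ref{modulo1} and the use of Lemma~\ref{cosule} inside the proof of Proposition~\ref{generalize}). What remains, as you say, is the generic case of three distinct nonzero coordinates, and here you accurately describe the elimination/resultant strategy and its limitations rather than claiming to resolve it. So your write-up is an honest account of what is known and what is not; it is not a proof of the conjecture, but neither does the paper contain one.
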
 
 
The ``only if" part has been proved in  Lemma~\ref{facto}. 
In direction of the ``if" part, we are able to offer the following partial result.

 \begin{theorem} \label{betterthannothing} 
Conjecture~{\em\ref{con3}} is true if $A$ either contains $1$ and $n$ with $2\leq n\leq 7$, or if it contains $2$ and $3$.  \end{theorem}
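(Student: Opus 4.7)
The plan is to use Newton's identity $p_k=-e_2 p_{k-2}+e_3 p_{k-3}$, valid in $R_3$ whenever $e_1=0$, to recursively express $p_k|_{p_1=0}$ as a weighted-homogeneous polynomial $P_k(E_2,E_3)\in\CC[E_2,E_3]$, where $E_2,E_3$ have weights $2$ and $3$. The triple $(p_1,p_n,p_c)$ is a regular sequence in $R_3$ if and only if $(P_n,P_c)$ is a regular sequence in $\CC[E_2,E_3]$. A direct computation yields
\[
P_2=-2E_2,\ P_3=3E_3,\ P_4=2E_2^2,\ P_5=-5E_2E_3,\ P_6=-2E_2^3+3E_3^2,\ P_7=7E_2^2E_3,
\]
and an induction on the recursion gives two elementary criteria: $E_3\mid P_c$ iff $c$ is odd, and $E_2\mid P_c$ iff $3\nmid c$.

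For $n\in\{2,3,4,5,7\}$, $P_n$ is (up to a scalar) a product of powers of $E_2$ and $E_3$, so its minimal primes lie in $\{(E_2),(E_3)\}$. Hence regularity of $(P_n,P_c)$ translates directly into the two criteria above, and a case-by-case comparison with $nc\equiv 0\pmod 6$ confirms the conjecture in these five sub-cases.

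For $\{2,3\}\subseteq A$ (the conjecture predicts regularity for every $c\geq 4$): from $p_2=p_3=0$ with $e_1=s$ free, Newton's identities force $e_2=s^2/2$ and $e_3=s^3/6$. The triples $(x_1,x_2,x_3)$ are therefore the scalings by $s$ of the root-set of the irreducible cubic $6u^3-6u^2+3u-1$ (no rational root; discriminant $-216$), which has one real root $u_1$ and a conjugate complex pair $u_2,\bar u_2$ with $|u_2|^2=1/(6u_1)$. A direct calculation with this cubic yields $|u_1|^c>2|u_2|^c$ for all $c\geq 4$, so the triangle inequality delivers $w_c:=u_1^c+u_2^c+\bar u_2^c\neq 0$ for all such $c$.

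The delicate case is $n=6$, since $P_6=-2E_2^3+3E_3^2$ is irreducible over $\CC$ and cuts out a single point of $\PP(2,3)$ not lying on any coordinate axis. Regularity of $(P_6,P_c)$ is then equivalent to $P_6\nmid P_c$. Using $3E_3^2\equiv 2E_2^3\pmod{P_6}$, one reduces $P_c$ to the canonical normal form $A_c E_2^{c/2}$ (for $c$ even) or $B_c E_2^{(c-3)/2}E_3$ (for $c$ odd); the claim becomes $A_c\neq 0$ for all even $c\neq 6$ and $B_c\neq 0$ for all odd $c\geq 3$, with the coupled recursions $A_c=-A_{c-2}+\tfrac{2}{3}B_{c-3}$ and $B_c=-B_{c-2}+A_{c-3}$ starting from $A_2=-2,\,B_3=3$. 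Equivalently, $R_c:=P_c(3,3\sqrt 2)=\alpha^c+\beta^c+\gamma^c$, where $\alpha,\beta,\gamma$ are the roots of $t^3+3t-3\sqrt 2$---a polynomial Eisenstein at the inert prime $3$ of $\ZZ[\sqrt 2]$ (so $v_3(\alpha)=\tfrac{1}{3}$), hence irreducible over $\QQ(\sqrt 2)$. Establishing the non-vanishing of $A_c$ and $B_c$ for $c\neq 6$ is the main obstacle of the proof; it is plausibly settled by combining explicit $3$-adic estimates coming from the Eisenstein structure with a finite check for small $c$.
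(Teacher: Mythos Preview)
Your treatment of the cases $n\in\{2,3,4,5,7\}$ and $\{2,3\}\subseteq A$ is correct and essentially identical to the paper's: the same monomiality of $P_n$ modulo $e_1$, the same divisibility criteria (your ``$E_3\mid P_c\Leftrightarrow c$ odd'' and ``$E_2\mid P_c\Leftrightarrow 3\nmid c$'' are exactly the radical computations the paper performs), and for $\{2,3,c\}$ the same reduction to the cubic $x^3-x^2+\tfrac12 x-\tfrac16$ followed by the triangle-inequality estimate $(\alpha/|\beta|)^4>2$.

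The genuine gap is the case $n=6$. You correctly reduce to the non-vanishing of the coefficients $A_c$ (even $c\neq6$) and $B_c$ (odd $c\ge3$), and your Eisenstein reformulation over $\ZZ[\sqrt2]$ is a pleasant way to see why a $3$-adic approach is natural. But the sentence ``it is plausibly settled by combining explicit $3$-adic estimates \dots\ with a finite check for small $c$'' is not a proof. This step is precisely the technical heart of the theorem: the paper devotes its entire appendix to establishing $a_m\neq0$ by a careful case analysis of the $3$-adic valuation of each summand in the explicit expansion of $p_m$ in the $e_i$'s, splitting according to $h\bmod 3$ and, in the delicate subcases $h\equiv3,5\pmod 9$, pairing the two top summands and bounding the remaining ones via carry-counting in ternary addition. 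The Eisenstein datum $v_3(\alpha)=\tfrac13$ alone only gives $v_3(R_c)\ge c/3$, which is far from non-vanishing; one still needs the fine structure that the paper extracts. Until you actually carry out such an argument, the $n=6$ case remains open in your write-up.
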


\begin{proof}   For simplicity, let us denote by $p_k$ and $e_k$ the corresponding symmetric polynomials in $3$ variables. In the following considerations we will use  the basic fact that $e_1,e_2,e_3$ are algebraically independent generators for the algebra of symmetric polynomials in $x_1,x_2,x_3$.
 
Assume first that $A=\{1,n,m\}$ with  $2\leq n\leq 7$,  $m\neq n$, and $6\mid nm$. 
 Formulas expressing the power sum $p_{h}$ in terms of the elementary symmetric polynomials  are well-known, see \cite[Ex.~20, p.~33]{M}.  We will  use the  fact that  every monomial $e_1^{\beta_1}e_2^{\beta_2}e_3^{\beta_3}$ with $\beta_1+2\beta_2+3\beta_3=h$ appears in the expression of $p_h$ with a non-zero coefficient.   The cases  $n=2,3,4,5,7$ are easy since in those cases $p_n$ is a monomial in $e_2$ and $e_3$ mod $(e_1)$.  For instance, $p_4=ue_2^2$ mod $(e_1)$ and $p_5=ue_2e_3$ mod $(e_1)$, where $u$ stands for a non-zero integer.  This is enough to show that 
$$
\sqrt{(p_1,p_n)}=\begin{cases}
\sqrt{(e_1,e_2)} &  \mbox{  if } n=2,4, \\
\sqrt{(e_1,e_3)} &  \mbox{  if } n=3,\\ 
\sqrt{(e_1,e_2e_3)} &  \mbox{  if } n=5,7,
\end{cases}
$$
where $\sqrt{I}$ denotes the radical of the ideal $I$. In the cases $n=2$ or $n=4$,  we have  $m=3v$  , hence $p_m=ue_3^v$ mod $(e_1,e_2)$ for some non-zero integer $u$. This implies that $\sqrt{(p_1,p_n,p_m)}=\sqrt{(e_1,e_2,e_3)}$. One concludes in a similar manner in the cases $n=3$ and $n=5,7$.

 The proof  for $n=6$ is more complicated since $p_6$ is not a monomial mod $(e_1)$. Indeed,  $p_6=-2e_2^3+3e_3^2$ mod $(e_1)$.  However,  reducing $p_m$ mod $(e_1)$ and $p_6$, that is, replacing $e_1$ with $0$  and  $e_3^2$ with $(2/3)e_2^3$ we obtain 
$$p_m=
\begin{cases}
\hphantom{e_1^{h-1}}a_m e_2^h \mod (p_1, p_6) &  \mbox{ if } m=2h, \\ 
a_m e_2^{h-1}e_3 \mod (p_1, p_6)&  \mbox{ if } m=2h+1 ,\\
\end{cases}
$$
where $a_m$ is an integer. The assertion that we have to prove   is   equivalent to the non-vanishing of  coefficient $a_m$.  The integer $a_m$ can be computed using the formula  expressing $p_m$ in terms of the $e_i$'s, see \cite[Ex.~20, p.~33]{M}. Explicitly, we have
$$a_m= 
\begin{cases}
\displaystyle
\phantom{-}m
\sum_{b=0} ^{\lfloor h/3 \rfloor}\frac{(-1)^{h-b}}{h-b} 
\binom{h-b}{2b} (2/3)^b & \mbox{ if } m=2h , \\  \\
\displaystyle
-m
\sum_{b=0}  
^{\lfloor h/3 \rfloor}\frac{(-1)^{h-b}}{ h-b} \binom{h-b}{2b+1} (2/3)^b & \mbox{ if } m=2h+1 . 
\end{cases}
$$
To show that $a_m\neq 0$ for $m\neq 1$ and $6$,  we consider 
$$f_m(x)= 
\begin{cases}
\displaystyle
\phantom{-}m
\sum_{b=0} ^{\lfloor h/3 \rfloor} \frac{(-1)^{h-b}}{h-b} 
\binom{h-b}{2b} x^b & \mbox{ if } m=2h ,\\ \\
\displaystyle
-m
\sum_{b=0}  ^{\lfloor h/3 \rfloor}\frac{(-1)^{h-b}}{ h-b} \binom{h-b}{2b+1} x^b & \mbox{ if } m=2h+1 . 
\end{cases}
$$
Since
$$\frac {2h} {h-b}\binom {h-b}{2b}=2\binom {h-b}{2b}-\binom {h-b-1}{2b-1}$$
and
$$\frac {2h+1} {h-b}\binom {h-b}{2b+1}=2\binom {h-b}{2b+1}-\binom {h-b-1}{2b},$$
the polynomials $f_m(x)$ are in $\ZZ[x]$. We have to show that $2/3$ is not a root of $f_m(x)$ for $m\neq 1,6$.  If $m<8$,  then  $f_m(x)$ is a non-zero constant. So we may assume $m\geq 8$. If $m$ is odd, then the coefficient of the term of degree $0$ in $f_m(x)$ is odd. If $m$ is even and $m\not\equiv 0$~$(\text{mod }6)$ and $m\not\equiv 10$~$(\text{mod }18)$ then   the leading coefficient of  of $f_m(x)$ is not divisible by $3$.  This is enough to conclude that $2/3$ is not a root of $f_m(x)$ in this case. 
If $m\equiv0$~$(\text{mod }6)$ or $m\equiv10$~$(\text{mod }18)$ one needs a more sophisticated analysis of the $3$-adic valuation of the other  coefficients of $f_m(x)$. The argument  is given in the appendix.  

Finally, assume that $A$ contains $2$ and $3$, say $A=\{2,3,d\}$ for some $d>3$. Since $p_1,p_2,p_3$ generates the algebra of symmetric polynomials in $x_1,x_2,x_3$, we have $p_d=c_dp_1^d$ mod $(p_2,p_3)$ for a uniquely determined rational number $c_d$. The statement we have to prove is equivalent to the non-vanishing of $c_d$. 
Since $e_2=\frac 12 p_1^2$  and $e_3=\frac 16p_1^3$ mod $(p_2,p_3)$, from Newton's equation $p_d=e_1p_{d-1}-e_2p_{d-2}+e_3p_{d-3}$  we obtain that 
$$c_d=c_{d-1}-\frac 12c_{d-2}+\frac 16c_{d-3} \mbox{ for } d>3, \mbox{ with } c_1=1, c_2=0, c_3=0.$$
Solving the linear recurrence, see \cite[4.1.1]{S}, we get that 
$$c_d=\alpha^d+\beta^d+\bar{\beta}^d,$$
where $\alpha\in \RR$, $0<\alpha<1$, and $\beta,\bar{\beta}\in \CC$ are the roots of the polynomial 
$$x^3-x^2+\frac 12x-\frac 16=0.$$
We  show that $c_d>0$ for $d>3$. To this end,  it is enough to show that $\alpha^d>2|\beta|^d$ for $d>3$, equivalently, $(\alpha/|\beta|)^d>2$. Hence it is enough to prove the statement for $d=4$. With the help of a computer algebra program, we find that $(\alpha/|\beta|)^4=2.17\dots$.
\end{proof} 

In order to generalize part of Theorem~\ref{betterthannothing} we need the following lemma.  

\begin{lemma}
\label{modulo1}
Let $A=\{a,b,c\}$, where $\GCD(A)=1$ and $abc\equiv0$~$\text{\em(mod }3)$. Then the zero-set of the polynomial system associated to $p_A(3)$ intersects  $\{ (z_1,z_2,z_3)\in \CC^3 : |z_1|=|z_2|=|z_3| \}$ only in $(0,0,0)$. 
\end{lemma}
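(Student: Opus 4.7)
The plan is to reduce to the unit circle and exploit the rigid geometry of three unit-modulus complex numbers whose sum is zero. Suppose $(z_1,z_2,z_3)$ is a common zero of the $p_a(3)$'s for $a\in A$ with $|z_1|=|z_2|=|z_3|=r$. If $r=0$ we are done, so assume $r>0$ and write $z_i=ru_i$ with $|u_i|=1$. Since $p_k(z_1,z_2,z_3)=r^k\,p_k(u_1,u_2,u_3)$, the hypothesis becomes
\[
u_1^k+u_2^k+u_3^k=0 \qquad \text{for } k\in\{a,b,c\}.
\]

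The key observation is that if three complex numbers of modulus $1$ sum to $0$, then they are the three vertices of an equilateral triangle inscribed in the unit circle, i.e.\ they equal $\{w,w\omega,w\omega^2\}$ for some $w\in S^1$, where $\omega=e^{2\pi i/3}$. (One quickly checks this from $|v_3|^2=|v_1+v_2|^2=1$, forcing $\mathrm{Re}(v_1\bar v_2)=-1/2$.) Applying this to each $k\in A$, for each such $k$ there is some $z_k\in S^1$ with $\{u_1^k,u_2^k,u_3^k\}=\{z_k,z_k\omega,z_k\omega^2\}$; in particular $u_1^{3k}=u_2^{3k}=u_3^{3k}$, so $(u_i/u_j)^{3k}=1$ for all $i,j$ and all $k\in A$.

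Because $\gcd(3a,3b,3c)=3\gcd(a,b,c)=3$, the quotients $u_i/u_j$ are all cube roots of unity. The $u_i$ must also be distinct, since if $u_1=u_2$ then $2u_1^a+u_3^a=0$ would give $|u_3^a|=2$. Hence $\{u_1,u_2,u_3\}=\{w,w\omega,w\omega^2\}$ for some $w\in S^1$, and a direct computation yields
\[
p_k(u_1,u_2,u_3)=w^k\bigl(1+\omega^k+\omega^{2k}\bigr)=\begin{cases}3w^k,& 3\mid k,\\ 0,& 3\nmid k.\end{cases}
\]
By hypothesis $3\mid abc$, so at least one index in $A$, say $a$, is divisible by $3$; then $p_a(u_1,u_2,u_3)=3w^a\ne0$, contradicting our assumption. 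Therefore $r=0$ and $(z_1,z_2,z_3)=(0,0,0)$.

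I do not anticipate a real obstacle: the argument is essentially the elementary geometric fact about three unit vectors summing to zero, combined with the divisibility bookkeeping on the exponents. The only point requiring some care is the distinctness of the $u_i$, which is needed to identify $\{u_1,u_2,u_3\}$ (and not just the multiset of cubes) with an equilateral triangle; this is handled by the modulus argument above.
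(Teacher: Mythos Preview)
Your proof is correct and follows essentially the same approach as the paper's: both hinge on the fact that three unit-modulus complex numbers summing to zero must be the vertices of an equilateral triangle, and then use a $\gcd$/B\'ezout argument on the exponents to force the $u_i$ (or $z_i$) themselves to be third roots of unity times a common factor, at which point the index divisible by~$3$ gives the contradiction. The only cosmetic difference is that the paper normalizes by dividing through by $z_3$ and tracks a single coordinate $z_1$, whereas you normalize by the common modulus $r$ and track the pairwise ratios $u_i/u_j$; this forces you to insert the (easy) distinctness argument, which the paper sidesteps.
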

\begin{proof} By contradiction, assume $(z_1,z_2,z_3)\in \CC^3$ is a solution of the polynomial system associated to $p_A(3)$ and $|z_1|=|z_2|=|z_3|\neq 0$. Dividing by $z_3$, we may assume that $z_3=1$ and $|z_1|=|z_2|=1$. Note that the only complex numbers $w_1,w_2$ satisfying  $|w_1|=|w_2|=1$ and $w_1+w_2+1=0$ are the two primitive third roots of unity. 
Hence $z_1^a,z_1^b$ and $z_1^c$ are primitive third roots of $1$. Since $\GCD(a,b,c)=1$, there exist integers $\alpha,\beta, \gamma$  such that $a\alpha+b\beta+c\gamma=1$. It follows that $z_1=z_1^{a\alpha+b\beta+c\gamma}=(z_1^a)^\alpha(z_1^b)^\beta(z_1^c)^\gamma$ and hence $z_1$ itself is a third root of $1$. But one among $a,b,c$, say $a$, is divisible by $3$. Hence $z_1^a=1$ which is a contradiction. 
\end{proof}

We can now state, and prove, the announced partial generalization of
Theorem~\ref{betterthannothing}.

\begin{proposition} 
 \label{generalize} 
 Conjecture~{\em\ref{con3}} holds  if  $A$ contains $a$ and $at$ with 
$t\in \{2,3,4,\break 5,7\}$. 
\end{proposition}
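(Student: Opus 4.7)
The plan is to reduce the problem, via the substitution $w_i=z_i^a$, to a situation where the $e_1$-projection argument in the proof of Theorem~\ref{betterthannothing} and Lemma~\ref{modulo1} both apply. Write $A=\{a,at,c\}$ with $\GCD(a,c)=1$ and $6\mid ta^2c$, and suppose $z=(z_1,z_2,z_3)\in\CC^3$ is a common zero of $p_a(3),p_{at}(3),p_c(3)$. Setting $w=(z_1^a,z_2^a,z_3^a)$, one has $p_1(w)=p_a(z)=0$ and $p_t(w)=p_{at}(z)=0$, so in particular $e_1(w)=0$.

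A routine application of Newton's identities modulo $(e_1)$, exactly as in the proof of Theorem~\ref{betterthannothing}, gives
\[
p_2\equiv -2e_2,\quad p_3\equiv 3e_3,\quad p_4\equiv 2e_2^2,\quad p_5\equiv -5e_2e_3,\quad p_7\equiv 7e_2^2e_3\pmod{e_1}.
\]
Consequently, for each $t\in\{2,3,4,5,7\}$ the relations $e_1(w)=p_t(w)=0$ force $e_2(w)=0$ or $e_3(w)=0$; only the former can occur when $t\in\{2,4\}$ and only the latter when $t=3$.

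I then split into the two sub-cases. If $e_1(w)=e_2(w)=0$, the coordinates $w_1,w_2,w_3$ are the three roots of $X^3-e_3(w)$, hence of common modulus, whence $|z_1|=|z_2|=|z_3|$; since $\GCD(a,at,c)=1$ and $3\mid 6\mid ta^2c$, Lemma~\ref{modulo1} forces $z=0$. If instead $e_1(w)=e_3(w)=0$, the $w_i$'s are the roots of $X(X^2+e_2(w))$, so after relabelling $w_1=0$ and $w_3=-w_2$. Then $z_1=0$; if moreover $w_2=0$ we are done, so assume $z_2,z_3\neq 0$ and set $u=z_2/z_3$. From $w_3=-w_2$ one gets $u^a=-1$, and from $p_c(z)=z_2^c+z_3^c=0$ one gets $u^c=-1$. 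Hence $u^{2a}=u^{2c}=1$, so $u^2=1$ by $\GCD(a,c)=1$, and then $u=-1$ because $u^a=-1$. This makes both $a$ and $c$ odd; but this sub-case occurs only when $t\in\{3,5,7\}$, each of them odd, so $6\mid ta^2c$ would force $2\mid a^2c$, giving the required contradiction.

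I do not foresee any serious obstacle. The main ingredients---the monomial shape of $p_t$ modulo $(e_1)$ for $t\in\{2,3,4,5,7\}$, Lemma~\ref{modulo1}, and a small parity check---are all already present in the paper. The only creative step is the substitution $w_i=z_i^a$, which transports the $e_1$-projection technique from the proof of Theorem~\ref{betterthannothing} to the present three-parameter setting, at the mild cost of having to inspect what $p_c(z)=0$ becomes in the one sub-case where $w$ is not constrained to have equal-modulus coordinates.
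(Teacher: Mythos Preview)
Your proof is correct and follows essentially the same strategy as the paper's: the substitution $w_i=z_i^a$, the monomial shape of $p_t$ modulo $(e_1)$ for $t\in\{2,3,4,5,7\}$, and the appeal to Lemma~\ref{modulo1} in the equal-modulus case are exactly the paper's ingredients. The only difference is organizational: the paper dispatches the possibility $z_1z_2z_3=0$ at the outset by invoking Lemma~\ref{cosule} (equivalently Lemma~\ref{p2}(a)), whereas you treat it as the sub-case $e_3(w)=0$ and redo the parity argument by hand---your computation with $u=z_2/z_3$ is precisely a specialization of Lemma~\ref{p2}(a).
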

\begin{proof} 
Let $\rho$ be a primitive third root of $1$. We claim that for $t\in \{2,3,4,5,7\}$ the zero-set of $p_1(3)$ and $p_t(3)$ consists (up to multiples and permutations) of at most $(1,\rho,\rho^2)$ (if $t\not\equiv 0$~$\text{mod }3)$) and $(1,-1,0)$ (if $t$ is odd).  
The assertion follows from the fact that, for these values of $t$, $p_t(3)$ is a monomial in $e_2(3)$ and $e_3(3)$~$\text{mod }(e_1(3))$.  
From Lemma~\ref{cosule} it follows that every non-zero point $z=(z_1,z_2,z_3)\in \CC^3$ in the zero set of $p_A(3)$  satisfies $z_1z_2z_3\neq 0$. Hence, according to the claim above, $(z_1^a,z_2^a,z_3^a)$ equals $(1,\rho,\rho^2)$ (up to multiples and permutations). Hence $|z_1|=|z_2|=|z_3|$ and this contradicts Lemma~\ref{modulo1}. 
\end{proof}
 
\begin{remark} 
\label{Eisen}
For $b\in \NN$, $b>1$, set $f_b(x)=1+x^b+(-1)^b(x+1)^b$. Note that $f_b(x)=p_b(x,1,-1-x)$.  
Let $A=\{1,a,b\}$ with 
$ab\equiv0$~(mod~6).  Conjecture~\ref{con3}  for $A$ is equivalent to  $\gcd(f_a(x), f_b(x))=1$.  We expect $f_b(x)$ to be irreducible in $\QQ[x]$  up to the factor $x$ and cyclotomic factors $1+x$ or $1+x+x^2$ which are present or not depending on 
$b$~mod~6.  In particular,  we expect $f_b(x)$ to be irreducible $\QQ[x]$ if 
$b\equiv0$~(mod~6). A simple computation shows that Eisenstein's criterion  applies to   
$f_b(x+1)$  with respect to $p=3$ for $b$ of the form $3^u(3^v+1)$ with $u>0$ and $v\geq 0$. These considerations imply that, if $b$ is of that form,   then $f_b$ is irreducible (over $\QQ$).  Hence,  Conjecture~\ref{con3} holds for $A=\{1,a,b\}$ where $a<b$ and $b=3^u(3^v+1)$ with $u>0$ and $v\geq 0$. \end{remark}

For $n>3$  the condition $\big(\prod_{i\in A} i\big)\equiv0$~$(\text{mod }n!)$ does not imply that $p_A(n)$ is a   regular sequence.  For $n=4$, computer experiments suggest the following conjecture.

\begin{conjecture} \label{C3}
Let $A\subset \NN^*$ with 
$\vert A\vert=4$, say $A=\{a_1,a_2,a_3,a_4\}$, and assume $\GCD(A)=1$. Then $p_A(4)$  is a   regular sequence if and only if  $A$ satisfies the following conditions: 

\begin{itemize} 
\item[(1)]  At least two of the $a_i$'s are even, at least one is  a multiple of $3$,  and at least one is a  multiple of $4$. 
\item [(2)]  If $E$  is the set of the even elements in A and $d=\GCD(E)$ then the set   $\{ a/d : a \in E\}$  contains an even number. 
\item[(3)] $A$ does not contain a subset of the form  $\{d,2d,5d\}$.
\end{itemize} 
\end{conjecture}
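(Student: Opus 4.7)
The plan is to prove the two directions of the ``if and only if'' separately: the necessity of (1)--(3) by producing an explicit nonzero common zero whenever one of them fails, and the sufficiency by a case analysis on the number of distinct values among the coordinates of a putative nonzero common zero. The necessity uses only tools developed earlier in the paper; the sufficiency is the principal obstacle.

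For the necessity of (1) I would apply Lemma~\ref{rootsof1}(2) with $c=2,3,4$, for which $\lfloor 4/c\rfloor=2,1,1$ respectively. For the negation of (2), set $d=\GCD(E)$ and suppose $a/d$ is odd for every $a\in E$; pick $\zeta\in\CC$ with $\zeta^d=-1$ and verify that $(\zeta,-\zeta,1,-1)\in\CC^4$ kills every $p_a(4)$: for odd $a\in A\setminus E$ the sum vanishes by the anti-symmetry $\zeta\leftrightarrow-\zeta$, $1\leftrightarrow-1$, while for $a\in E$ one computes $\zeta^a+1=(-1)^{a/d}+1=0$, so $p_a(\zeta,-\zeta,1,-1)=2(\zeta^a+1)=0$. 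For the negation of (3), assume $\{d,2d,5d\}\subseteq A$; by the finite surjection $(x_1,\ldots,x_4)\mapsto(x_1^d,\ldots,x_4^d)$ it suffices to treat $d=1$. Newton's identity $p_5=e_1p_4-e_2p_3+e_3p_2-e_4p_1$ vanishes identically on $V(p_1,p_2)=V(e_1,e_2)\subset\CC^4$, so $V(p_1,p_2,p_5)$ equals $V(e_1,e_2)$ and has affine dimension $2$; its projectivisation is a curve in $\PP^3$, which by the projective dimension theorem meets the hypersurface $V_+(p_{a_4})$ nontrivially, producing the required nonzero common zero.

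For the sufficiency, let $z=(z_1,\ldots,z_4)$ be a putative nonzero common zero and group the coordinates into $c$ distinct values $\zeta_1,\ldots,\zeta_c$ with multiplicities $m_1,\ldots,m_c$; the constraints read $\sum_j m_j\zeta_j^a=0$ for every $a\in A$. The case $c=1$ is immediate. For $c=2$ with multiplicities $(2,2)$, $\alpha=\zeta_2/\zeta_1$ satisfies $\alpha^a=-1$ for every $a\in A$; writing $1=\sum_a k_a a$ via $\GCD(A)=1$ gives $\alpha=\prod_a(-1)^{k_a}\in\{\pm1\}$, which forces $\alpha=-1$ and hence every $a\in A$ odd, contradicting~(1). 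For $c=2$ with multiplicities $(3,1)$, $\beta=\zeta_2/\zeta_1$ satisfies $\beta^a=-3$ and hence $\beta^{a-b}=1$ for all $a,b\in A$; this gives $|\beta|=1$, but $\beta^a=-3$ forces $|\beta|^a=3$, a contradiction.

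The main obstacle is handling $c=3$ and $c=4$: the Vandermonde-determinant argument of Proposition~\ref{Newton} is not available because $A$ is not a set of consecutive integers. The natural approach is to convert $\sum_j m_j\zeta_j^a=0$ into identities on the elementary symmetric functions $e_i(\zeta_1,\ldots,\zeta_c)$ via Newton's formulas and then to force each $e_i$ to vanish, using (3) to rule out the ``resonance'' $p_{5d}\in(p_d,p_{2d})$ discovered above (and its dilates), (2) to eliminate the symmetric configurations of type $(\zeta,-\zeta,1,-1)$ appearing in the necessity argument, and (1) to kill the $\beta_c$-degeneracies of Lemma~\ref{rootsof1}. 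An alternative route is to reformulate the equations as weighted vanishing sums of roots of unity and to appeal to classification theorems of Conway--Jones or Lam--Leung type. In either approach, matching the exact trichotomy (1)--(3)---and in particular isolating $\{d,2d,5d\}$ as the unique offending triple---appears to require a genuinely new arithmetic input, which is presumably why the statement is still only a conjecture.
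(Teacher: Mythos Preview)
The statement is a \emph{conjecture}: the paper itself proves only the necessity of (1)--(3), in the Remark immediately following the statement, and leaves sufficiency open.  Your proposal is accurate on both counts.

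Your necessity arguments are correct and agree with the paper's.  For (1) you and the paper both invoke Lemma~\ref{rootsof1}(2).  For (2) the paper uses the two-parameter family $P=(x,-x,y,-y)$, observes that for odd $k$ one has $p_k(P)=0$ while for even $k$ one has $p_k(P)=2p_k(x,y)$, and then appeals to Lemma~\ref{p2}(a); your single explicit point $(\zeta,-\zeta,1,-1)$ with $\zeta^d=-1$ is the specialisation $x=\zeta$, $y=1$ of that family and works equally well.  For (3) the paper simply says that ``by degree reasons'' $p_5(4)\in(p_1(4),p_2(4))$, hence $p_{5d}(4)\in(p_d(4),p_{2d}(4))$ after the substitution $x_i\mapsto x_i^d$; your Newton-identity computation $p_5=e_1p_4-e_2p_3+e_3p_2-e_4p_1$ together with $(e_1,e_2)=(p_1,p_2)$ makes this containment explicit, and the remaining dimension count (three equations in four variables) is the same in both treatments.

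Your partial sufficiency analysis for $c\le 2$ distinct coordinate values is correct and goes slightly beyond what the paper records.  You are also right that the genuine obstruction lies in the cases $c=3$ and $c=4$, and your closing assessment---that isolating $\{d,2d,5d\}$ as the unique bad triple would require new arithmetic input---matches the paper's position: the sufficiency direction remains open there as well.
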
 

\begin{remark}
(a) Condition (1) is obviously stronger than $4!$ divides $a_1a_2a_3a_4$. 
For instance, the set $\{1,3,5,8\}$ does not satisfy (1) and the product of its elements is divisible by  $4!$. 

\smallskip
(b) The conditions (2) and (3)  are  independent.
For example, the set $\{1,3,4,12\}$   satisfies (1) but not (2) and  $\{1,2,5,12\}$   satisfies (1) and (2) but not (3). 

\smallskip
(c)
We can prove  that  conditions  (1), (2) and (3) are necessary. Indeed,  assume $p_A(4)$ is a   regular sequence. Then $(1)$ holds by Lemma~\ref{rootsof1}.  
To get (2), consider the point $P=(x,-x,y,-y)\in \CC^4$. Obviously $P$ is a solution of the equation $p_k(4)=0$ with $k$ odd.    If  $k$ is even, then  $p_k(4)$ evaluated at $P$  is $2p_k(2)$ evaluated at $(x,y)$.  Hence we see that the only common root of  $p_k(2)$ with   $k\in E$  is $(0,0)$. So, by Lemma~\ref{p2}, at least one element of $\{ a/d : a \in E\}$ is even.  To show that (3) is necessary, note that, by degree reasons, $p_5(4)$ is in the ideal 
$(p_1(4), p_2(4))$. Replacing $x_i$ by $x_i^d$, we see that $p_{5d}(4)$ is in the ideal 
$(p_d(4), p_{2d}(4))$. So (3) follows. 
\end{remark}

For complete symmetric polynomials in three variables  we formulate the following
conjecture.

\begin{conjecture} Let $A=\{a,b,c\}$ with $a<b<c$.  Then $h_A(3)$ is a   regular sequence if and only if  the following conditions are satisfied: 
\begin{itemize}
\item[(1)]   $abc\equiv0$~$(\text{\em mod }6)$.
\item[(2)] $\GCD(a+1,b+1,c+1)=1$.
\item[(3)] For all $t\in \NN$ with $t>2$  there exist $d\in A$ such that $d+2\not\equiv 0,1$~$(\text{\em mod }t)$.
\end{itemize} 
\end{conjecture}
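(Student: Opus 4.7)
The plan is to prove necessity and sufficiency separately, with the latter reducing to a case analysis of the possible common zeros.

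\emph{Necessity.} Each of the three conditions rules out an obstruction already recorded. Condition~(1) follows from Lemma~\ref{facto} since $\deg h_a=a$. For~(2), any nontrivial common zero $(y_1,y_2)$ of $h_A(2)$ lifts to the nontrivial zero $(y_1,y_2,0)\in\CC^3$ of $h_A(3)$, so by Lemma~\ref{p2}(b) we are forced to have $\GCD\{a+1:a\in A\}=1$. Condition~(3) is exactly Lemma~\ref{hroots}(2).

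\emph{Sufficiency.} Assume (1)--(3), and let $z=(z_1,z_2,z_3)\ne 0$ be a common zero of $h_A(3)$. We analyze by the pattern of vanishings and coincidences among the $z_i$. If two or three $z_i$ are zero, direct substitution gives $z=0$. If exactly one vanishes, $h_a(3)$ restricts to $h_a(2)$ in the nonzero coordinates and condition~(2) is contradicted via Lemma~\ref{p2}(b). If all $z_i$ equal a common $U\ne0$, then $h_a(U,U,U)=\binom{a+2}{2}U^a\ne 0$. The remaining substantive cases are \emph{Case~(E)} (exactly two equal, all nonzero), where we set $z_1=z_2=U$, $z_3=V$, $r:=V/U\notin\{0,1\}$, and \emph{Case~(D)} (all three distinct and nonzero), where by homogeneity we rescale to $z_3=1$ and put $u=z_1$, $v=z_2$. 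In Case~(D), the identity
\[
 h_a(3)\cdot (x_1-x_2)(x_1-x_3)(x_2-x_3)=x_1^{a+2}(x_2-x_3)+x_2^{a+2}(x_3-x_1)+x_3^{a+2}(x_1-x_2)
\]
from the proof of Lemma~\ref{hroots}, combined with the non-vanishing of the Vandermonde $(u-v)(u-1)(v-1)$, converts $h_a(3)(u,v,1)=0$ into
\[
 \sigma_{a+2}(u)=\sigma_{a+2}(v)\quad\text{for all } a\in A,\qquad \sigma_m(x):=\frac{x^m-1}{x-1}.
\]
The goal is then to prove that this forces an integer $t>2$ with $u^t=v^t=1$ and $a+2\equiv0$ or $1\pmod{t}$ for every $a\in A$, contradicting~(3). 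In Case~(E), the telescoping $(1-r)^2\sum_{k=0}^{a}(a-k+1)r^k=(a+1)-(a+2)r+r^{a+2}$ converts $h_a(U,U,V)=0$ into $g_a(r):=r^{a+2}-(a+2)r+(a+1)=0$ for every $a\in A$; pairwise subtraction yields $r^{a+2}\sigma_{b-a}(r)=b-a$ for $a<b$ in $A$, and these three such multiplicative/additive relations combined with~(1) should force $r$ to be a root of unity whose order is then incompatible with~(3).

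\emph{Main obstacle.} The crux is Case~(D): the conjecture essentially asserts that, for three variables, the root-of-unity construction of Lemma~\ref{hroots} exhausts the common zeros in the distinct-coordinates stratum. Ruling out all other algebraic solutions of the simultaneous equations $\sigma_{a+2}(u)=\sigma_{a+2}(v)$ is delicate; a plausible route combines the partial-fraction formula $h_k(z_1,z_2,z_3)=\sum_{i=1}^3 z_i^{k+2}/\prod_{j\ne i}(z_i-z_j)$ with an elimination/resultant analysis (in the spirit of the $f_b(x)$ irreducibility argument of Remark~\ref{Eisen}) showing that the relevant elimination ideal cuts out only the root-of-unity strata. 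Case~(E) looks more tractable, likely amenable to a $p$-adic analysis analogous to the appendix of Theorem~\ref{betterthannothing}.
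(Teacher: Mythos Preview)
This statement is a \emph{conjecture} in the paper, not a theorem; the paper does not offer a proof of the ``if'' direction. The only argument the paper gives is the remark that ``the `only if' part follows from the considerations above,'' i.e., condition~(1) from Lemma~\ref{facto}, condition~(2) from Lemma~\ref{p2}(b) via the lift $(y_1,y_2,0)$, and condition~(3) from Lemma~\ref{hroots}(2). Your necessity argument matches this exactly.

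Your sufficiency outline, however, is not a proof, and you are candid about this: the ``Main obstacle'' you flag in Case~(D) is precisely the obstruction that makes this a conjecture rather than a theorem. The reduction via the Vandermonde identity to the system $\sigma_{a+2}(u)=\sigma_{a+2}(v)$ for $a\in A$ is correct and natural, but the subsequent assertion that this \emph{forces} $u,v$ to be $t$-th roots of unity with $a+2\equiv 0,1\pmod t$ is exactly what is not known. The analogy you draw with Remark~\ref{Eisen} and the appendix is reasonable as a heuristic, but no elimination or $p$-adic argument is currently available that closes either Case~(D) or Case~(E). So the gap you identify is genuine and, as far as the paper is concerned, open; your write-up should be read as evidence and a strategy for the conjecture, not a proof.
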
 
 
Again, the ``only if" part follows from the considerations above. For instance, if (2) does not hold then the three polynomials have a common non-zero solution of the form $(x,1,0)$.

\appendix

\section*{Appendix: Non-vanishing of the coefficient $a_m$} \label{ak} 
 
\global\def\theequation{A.\arabic{equation}}

We want to prove that
\begin{equation} \label{eq:1}
\sum _{b=0} ^{\lfloor{h/3}\rfloor}\frac{(-1)^{h-b}} {h-b}\binom {h-b} {2b}\(\frac {2}
{3}\)^{b}
\end{equation}
is non-zero except for $h=3$.

We may assume from now on that $h>3$.
The idea is a $3$-adic analysis of the summands.
All of them are rational numbers.
If $h\not\equiv 3,5$~(mod~9), then we shall show that the $3$-adic
valuation of the last summand (the summand for $b=\lfloor h/3\rfloor$
is smaller than the $3$-adic valuations of all other summands.
In this situation, the sum cannot be zero.
Similarly, if $h\equiv 3,5$~(mod~9), we shall show that the $3$-adic
valuations of the summands for $b\le\lfloor\frac {h} {3}\rfloor-2$ are all larger
than the $3$-adic valuation of the sum of the two summands for
$b=\lfloor\frac {h} {3}\rfloor$ and $b=\lfloor\frac {h} {3}\rfloor-1$. Here, summands with $b\le\lfloor\frac {h} {3}\rfloor-2$ do indeed exist (since we assumed
that $h>3$ which, together with $h\equiv 3,5$~(mod~9),
implies that $h$ must be at least
$3+9=12$), and therefore the sum cannot be zero.

We write $v_3(\frac {r} {s})$ for the $3$-adic valuation of the
rational number $\frac {r} {s}$ which, by definition, is $3^{a-b}$,
where $3^a$ is the largest power of $3$ dividing $r$, and
where $3^b$ is the largest power of $3$ dividing $s$.

We shall use the following (well-known and easy to prove) fact:
the $3$-adic valuation of a binomial coefficient $\binom mn$,
$v_3\(\binom {m+n}n\)$, is equal to the number of carries which occur
during the addition of the numbers $m$ and $n$ in ternary notation.
From now on, whenever we speak of ``carries during addition of two
numbers", we always mean the addition of these two numbers when
written in ternary notation.

\medskip
{\it Case 1}: $h\equiv0$~(mod 3). Let $h=3k$, $k>1$. In \eqref{eq:1} replace
$b$ by $k-b$ to obtain the sum
\begin{equation} \label{eq:2}
\sum _{b=0} ^{k}\frac {(-1)^{b}} {2k+b}\binom {2k+b} {3b}\(\frac {2}
{3}\)^{k-b}.
\end{equation}
Let first $k\not\equiv1$~(mod 3) (i.e., $h\not\equiv3$~(mod~9)).

For the summand in \eqref{eq:2} for $b=0$, we have
$$v_3\(\frac {1} {2k}\(\frac 2
{3}\)^{k}\)\le -k.$$
We claim that we have
\begin{equation} \label{eq:3}
v_3\(\frac {1} {2k+b}\binom {2k+b} {3b}\(\frac {2}
{3}\)^{k-b}\)>-k
\end{equation}
for all $b\ge1$.

To prove the claim we assume that $v_3(2k+b)=e$ and that
$3^{s-1}\le 3b<3^s$. Clearly $e\ge0$ and $s\ge2$. Using these
conventions, we have
\begin{multline} \label{eq:4}
v_3\(\frac {1} {2k+b}\binom {2k+b} {3b}\(\frac {2}
{3}\)^{k-b}\)\\
=b-k-e+\#(\text {carries during addition of $3b$ and
$2k-2b$}).
\end{multline}
Since $b\ge1$, the inequality \eqref{eq:3} holds for $e=0$.
From now on we assume that $e>0$.

If $s>e>0$, then from \eqref{eq:4} we get
\begin{align*} v_3\(\frac {1} {2k+b}\binom {2k+b} {3b}\(\frac {2}
{3}\)^{k-b}\)&\ge3^{s-2}-k-e\\
&\ge-k+3^{e-1}-e\\
&\ge-k.
\end{align*}
This proves \eqref{eq:3} for $e>1$, since in this case we have actually
$3^{e-1}>e$, and thus $>-k$ in the last line of the inequality chain.
It also proves \eqref{eq:3} for $e=1$ and $s>2$, since in this case we have
$3^{s-2}>3^{e-1}$, and thus we have $>-k+3^{e-1}-e$ in the inequality
chain. The only case which is left open is $e=1$ and $s=2$.

Let now $s\le e$. Let us visualize the numbers $2k+b$ and $3b$ in
ternary notation,
\begin{align*} (2k+b)_3&=\dots\underbrace {0\dots\dots0} _e,\\
(3b)_3&=\hphantom{0\dots\dots}\underbrace{\dots 0}_s.
\end{align*}
When we add $2k-2b$ to $3b$, then we get $2k+b$. Since $(2k+b)_3$ has
0's as $s$-th, $(s+1)$-st, \dots, $e$-th digit (from the right),
we must have at least $e-s+1$ carries when adding $2k-2b$ and $3b$.
From \eqref{eq:4} we then have
\begin{align*} v_3\(\frac {1} {2k+b}\binom {2k+b} {3b}\(\frac {2}
{3}\)^{k-b}\)&\ge3^{s-2}-k-e+(e-s+1)\\
&\ge-k+3^{s-2}-s+1\\
&\ge-k.
\end{align*}
The last inequality is in fact strict if $s>2$, proving the claim \eqref{eq:3}
in this case.

In summary, except for $s=2$ and $e>0$, the claim \eqref{eq:3} is proved.
However, if $s=2$, then $b=1$ or $b=2$. If $b=2$, then we have
$b>3^{s-2}$. If we use this in the first line of the inequality
chains, then both of them become strict inequalities.
Therefore the only case left is $b=1$ and $e>0$. However, if $b=1$,
then $e=v_3(2k+b)=v_3(2k+1)=0$ since we assumed that
$k\not\equiv1$~(mod~3). This is absurd, and hence the claim is established completely.

\smallskip
Now we address the (more complicated) case that $k\equiv1$~(mod~3)
(i.e., $h\equiv3$~(mod~9)). In this case we combine the summands in \eqref{eq:2} for $b=0$ and $b=1$:
$$\frac {1} {2k}\(\frac {2} {3}\)^k-
\frac {1} {2k+1}\binom {2k+1}3\(\frac {2} {3}\)^{k-1}=
-\frac {2^{k-1}} {3^k}\frac {(k-1)(2k^2+k+1)} {k}.
$$
The $3$-adic valuation of this expression is
$$v_3(k-1)-k.$$
Let us write $f=v_3(k-1)$, and, as before, $v_3(2k+b)=e$ and
$3^{s-1}\le 3b<3^s$. 
Since $k\equiv1$~(mod), we know that $f\ge1$.
We claim that for $b\ge2$ we have
\begin{equation} \label{eq:5}
v_3\(\frac {1} {2k+b}\binom {2k+b} {3b}\(\frac {2}
{3}\)^{k-b}\)>-k+f.
\end{equation}
Since the notation is as before, we may again use Equation~\eqref{eq:4} for the computation of the $3$-adic valuation on the left-hand
side.

For convenience, we visualize the involved numbers, \begin{align*} (2k)_3&=\dots\underbrace{0\dots\dots\dots02}_f\\
(2k+b)_3&=\dots\dots\dots\underbrace{0\dots00}_e\\
(b)_3&=\hphantom{\dots\dots}\underbrace{\dots\underbrace{2\dots21}_
{\min\{e,f\}}}
_{s-1}\\
(3b)_3&=\hskip-7pt\hphantom{\dots\dots}
\underbrace{\dots\underbrace{22\kern1pt.\kern1pt.\kern1pt210}_{
\min\{e,f\}+1}}_{s}
\end{align*}
For later use, we note that we must have
\begin{equation} \label{eq:6}
b\ge 3^{\min\{e,f\}}-2,
\end{equation}
and if $s-1>e$ even
\begin{equation} \label{eq:7}
b\ge 3^{s-2}+3^{\min\{e,f\}}-2.
\end{equation}
Another general observation is that, if $e>0$, then
the number of carries when adding $3b$ and $2k-2b$ must be at least
\begin{equation} \label{eq:8}
e+\chi(f\ge e)\cdot\max\{0,f-s+\chi(s\not=e+1)\},
\end{equation}
because there must be carries when adding up the second, third,\dots,
$(e+1)$-st digits (from the right), and because $(2k+b)_3$ has 0's as
$s$-th, $(s+1)$-st, \dots, $f$-th digit.
Here, $\chi(\mathcal A)=1$ if $\mathcal A$ is true and $\chi(\mathcal A)=0$
otherwise. The truth value $\chi(s\not=e+1)$ occurs because if $s=e+1$ we
cannot count the carry at the $(s=e+1)$-st digit twice.

\smallskip
(a) $e\ge f\ge1$. From \eqref{eq:4}, \eqref{eq:6}, and \eqref{eq:8}, we get
\begin{align*} v_3\(\frac {1} {2k+b}\binom {2k+b} {3b}\(\frac {2}
{3}\)^{k-b}\)&\ge (3^f-2)-k-e+e\\
&\ge -k+f.
\end{align*}
Moreover, if
$f>1$ then the last inequality is strict, proving the claim \eqref{eq:5} in
this case. The only case which remains open is $f=1$.
If $s>2$, we could use \eqref{eq:7} instead of \eqref{eq:6}, in which case the
claim would also follow. Thus, we are left with considering
the case $s=2$ and $f=1$. In that case, we would have $b=1$,
a contradiction.

(b) $f\ge s=e+1\ge2$. From \eqref{eq:4}, \eqref{eq:6}, and \eqref{eq:8}, we get
\begin{align*} v_3\(\frac {1} {2k+b}\binom {2k+b} {3b}\(\frac {2}
{3}\)^{k-b}\)&\ge 3^e-2-k-e+(e+f-s)\\
&\ge-k+f+3^{s-1}-s-2\\
&>-k+f,
\end{align*}
as long as $s>2$. On the other hand, if $s=2$ and, hence, $e=1$, then we would have $b=1$, a contradiction.

\smallskip
(c) $f\ge s> e+1\ge2$.
From \eqref{eq:4}, \eqref{eq:7}, and \eqref{eq:8}, we get
\begin{align*} v_3\(\frac {1} {2k+b}\binom {2k+b} {3b}\(\frac {2}
{3}\)^{k-b}\)&\ge 3^{s-2}+3^e-2-k-e+(e+f-s+1)\\
&\ge3^{s-2}+3^e-1-k+f-s\\
&>(s-1)+1-k+f-s\\
&>-k+f,
\end{align*}
since $s>2$ and $e\ge1$.

\smallskip
(d) $s>f >e\ge1$. From \eqref{eq:4}, \eqref{eq:7}, and \eqref{eq:8}, we get
\begin{align*} v_3\(\frac {1} {2k+b}\binom {2k+b} {3b}\(\frac {2}
{3}\)^{k-b}\)&\ge 3^{s-2}+3^e-2-k-e+e\\
&\ge 3^{f-1}+3^e-2-k\\
&>-k+f,
\end{align*}
since $f>1$ and $e\ge1$.

\smallskip
(e) $f >e=0$. Our previous visualization of the involved numbers then
collapses to
\begin{align*} (2k)_3&=\dots\underbrace{0\dots\dots\dots02}_f\\
(2k+b)_3&=\dots\dots\dots\dots\dots.\\
(b)_3&=\hskip4pt\hphantom{\dots\dots}\underbrace{\dots\dots\dots}
_{s-1}\\
(3b)_3&=\hskip-8pt\hphantom{\dots\dots.}
\underbrace{\dots\dots\dots0}_{s}
\end{align*}
Instead of \eqref{eq:6} or \eqref{eq:7}, we have now $b\ge3^{s-2}$.
The number of carries when adding $3b$ and $2k-2b$ must still be at least
$\max\{0,f-s+1\}$. Hence, from \eqref{eq:4} we get
\begin{equation} \label{eq:9}
v_3\(\frac {1} {2k+b}\binom {2k+b} {3b}\(\frac {2}
{3}\)^{k-b}\)=3^{s-2}-k+\max\{0,f-s+1\}.
\end{equation}

If $f< s$, then we obtain from \eqref{eq:9} that
\begin{align*} v_3\(\frac {1} {2k+b}\binom {2k+b} {3b}\(\frac {2}
{3}\)^{k-b}\)&\ge 3^{f-1}-k\\
&\ge-k+f,
\end{align*}
since $f\ge1$.
This inequality chain is in fact strict as long as $s>f+1$
or $f>1$. Thus, the only case which is open is $f=1$ and $s=2$.
In that case, we must necessarily have $b=2$. So, instead
of $b\ge3^{s-2}$ to obtain \eqref{eq:9}, we could have used $b>1=3^{s-2}$,
again leading to a strict inequality.

If $f\ge s$, then we obtain from \eqref{eq:9} that
\begin{align*} v_3\(\frac {1} {2k+b}\binom {2k+b} {3b}\(\frac {2}
{3}\)^{k-b}\)&\ge 3^{s-2}-k+f-s+1\\
&\ge-k+f,
\end{align*}
since $s\ge2$. This inequality chain is in fact strict whenever $s\ge3$. If $s=2$ then the same argument
as in the previous paragraph leads also to a strict inequality.

\smallskip
This completes the verification of the claim \eqref{eq:5}.

\medskip
{\it Case 2}: $h\equiv1$~(mod 3). Let $h=3k+1$, $k\ge1$. In \eqref{eq:1} replace
$b$ by $k-b$ to obtain the sum
\begin{equation} \label{eq:10}
\sum _{b=0} ^{k}\frac {(-1)^{b+1}} {2k+b+1}\binom {2k+b+1} {3b+1}\(\frac {2}
{3}\)^{k-b}.
\end{equation}

For the summand in \eqref{eq:10} for $b=0$, we have
$$v_3\(-\frac {1} {2k+1}(2k+1)\(\frac 2
{3}\)^{k}\)= -k.$$
We claim that we have
$$v_3\(\frac {1} {2k+b+1}\binom {2k+b+1} {3b+1}\(\frac {2}
{3}\)^{k-b}\)>-k$$
for all $b\ge1$. To see this we argue as in Case~1. Everything works in complete
analogy. However, what makes things less complicated here is the fact
that the first digit (from the right) of $3b+1$ is a 1.
Therefore there is an additional carry when adding $3b+1$ and $2k-2b$
(in comparison to the addition of $3b$ and $2k-2b$ in Case~1; namely
when adding the first digits), and this implies that the complications
that we had in Case~1 when $k\equiv1$~(mod~3) do not arise here.

\medskip
{\it Case 3}: $h\equiv2$~(mod 3). Let $h=3k+2$, $k\ge1$. In \eqref{eq:1} replace
$b$ by $k-b$ to obtain the sum
\begin{equation} \label{eq:11}
\sum _{b=0} ^{k}\frac {(-1)^{b}} {2k+b+2}\binom {2k+b+2} {3b+2}\(\frac {2}
{3}\)^{k-b}.
\end{equation}

For the summand in \eqref{eq:11} for $b=0$, we have
$$v_3\(\frac {1} {2k+2}\frac {(2k+2)(2k+1)}2\(\frac 2
{3}\)^{k}\)= -k+v_3(2k+1).$$
Let us write $f=v_3(2k+1)$. Thus,
\begin{equation} \label{eq:12}
v_3\(\frac {1} {2k+2}\frac {(2k+2)(2k+1)}2\(\frac 2
{3}\)^{k}\)= -k+f.
\end{equation}
Similarly to earlier, we assume that $e=v_3(2k+b+2)$ and that
$3^{s-1}\le 3b+3<3^s$. Then the analogue of \eqref{eq:4} is
\begin{multline} \label{eq:13}
v_3\(\frac {1} {2k+b+2}\binom {2k+b+2} {3b+2}\(\frac {2}
{3}\)^{k-b}\)\\
=b-k-e+\#(\text {carries during addition of $3b+2$ and
$2k-2b$}).
\end{multline}

For convenience, we visualize the involved numbers, \begin{align*} (2k+1)_3&=\dots\underbrace{0\dots\dots\dots00}_f\\
(2k+b+2)_3&=\dots\dots\dots\underbrace{0\dots00}_e\\
(b+1)_3&=\hphantom{\dots\dots}\underbrace{\dots\underbrace{0\dots00}_
{\min\{e,f\}}}
_{s-1}\\
(3b+2)_3&=\hskip-7pt\hphantom{\dots\dots}
\underbrace{\dots\underbrace{22\kern1pt.\kern1pt.\kern1pt222}_{
\min\{e,f\}+1}}_{s}
\end{align*}
The visualization of $(3b+2)_3$ has to be taken with a grain of salt
because, if $b=3^{s-2}-1$, then $(3b+2)_3$ has only $s-1$ digits.

For later use, we note that we must have $s-1>\min\{e,f\}$ and
\begin{equation} \label{eq:14}
b\ge 3^{s-2}-1+\chi(e=0).
\end{equation}
Moreover, if $s=2$, then $b+1$ has just one digit and therefore
necessarily $b=1$.
Another general observation is that
the number of carries when adding $3b$ and $2k-2b$ must be at least
\begin{equation} \label{eq:15}
e+\chi(f\ge e)\cdot(\chi(e>0)+\max\{0,f-s+1\}),
\end{equation}
because there must be carries when adding up the first, second, \dots, $e$-th digits (from the right), because if $f\ge e>0$ there must also
occur a carry when adding up the $(e+1)$-st digits,
and because $(2k+b+2)_3$ has 0's as
$s$-th, $(s+1)$-st, \dots, $f$-th digit. If we use \eqref{eq:14} and \eqref{eq:15} in \eqref{eq:13}, then we obtain the inequality
\begin{align} \notag
&v_3\(\frac {1} {2k+b+2}\binom {2k+b+2} {3b+2}\(\frac {2}
{3}\)^{k-b}\)\\
\notag
&\kern.4cm\ge3^{s-2}-1+\chi(e=0)-k-e+(e+\chi(f\ge
e)\cdot(\chi(e>0)+\max\{0,f-s+1\}))\\
&\kern.4cm\ge-k+3^{s-2}-1+\chi(f\ge e)+
\chi(f\ge e)\cdot\max\{0,f-s+1\}.
\label{eq:16}
\end{align}

Let first $k\not\equiv1$~(mod~3) (i.e., $h\not\equiv5$~(mod~9)) or, equivalently, $f=0$.
If we do the according simplifications in \eqref{eq:16}, then we arrive at
$$v_3\(\frac {1} {2k+b+2}\binom {2k+b+2} {3b+2}\(\frac {2}
{3}\)^{k-b}\)\ge-k+3^{s-2}-1\ge-k.$$
If $s>3$ the last inequality is in fact a strict inequality. If $s=2$ then necessarily $b=1$. In that case we could have used $b>0=3^{s-2}-1$ instead of \eqref{eq:14},
which would also lead to a strict inequality.
If we compare this with \eqref{eq:12}, then it follows that our
sum cannot vanish.

Now let $k\equiv1$~(mod~3) (i.e., $h\equiv5$~(mod~9)). Equivalently, $f\ge1$.
We combine the summands in \eqref{eq:11} for $b=0$ and $b=1$:
\begin{multline*}
\frac {1} {2k+2}\binom {2k+2}2\(\frac {2} {3}\)^k-
\frac {1} {2k+3}\binom {2k+3}5\(\frac {2} {3}\)^{k-1}\\
=
-\frac {2^{k-2}} {5\cdot3^k}{(2k+1)(2k^3+k^2-k-10)} .
\end{multline*}
The $3$-adic valuation of this expression is
$$v_3(2k+1)-k=-k+f.$$
We claim that we have
$$v_3\(\frac {1} {2k+b+2}\binom {2k+b+2} {3b+2}\(\frac {2}
{3}\)^{k-b}\)>-k+f$$
for all $b\ge2$. It should be noted that $b\ge2$ implies $s\ge3$.

\smallskip
(a) $f\ge s\ge3$. Since $s-1>\min\{e,f\}$, we must have $f>e$. Thus,
from \eqref{eq:16}, we get
\begin{align*}
v_3\(\frac {1} {2k+b+2}\binom {2k+b+2} {3b+2}\(\frac {2}
{3}\)^{k-b}\)
&\ge-k+3^{s-2}+(f-s+1)\\
&>-k+f,
\end{align*}
since $s\ge3$.

(b) $s>f\ge0$. From \eqref{eq:16}, we get
\begin{align*}
v_3\(\frac {1} {2k+b+2}\binom {2k+b+2} {3b+2}\(\frac {2}
{3}\)^{k-b}\)
&\ge-k+3^{s-2}-1\\
&\ge-k+\max\{1,3^{f-1}\}-1\\
&\ge-k+f,
\end{align*}
as long as $f\ge1$. If $f\ge3$, the inequality chain is in fact strict.
If $f=0$ or $f=1$ then, because of $s\ge3$, we could have used
$3^{s-2}>1$ to
obtain that the $3$-adic valuation in question must be at least
$-k+f=-k$. If $f=2$ and $s>3$, then we could have used the estimation
$3^{s-2}>3^{f-1}$ instead. The only remaining case is $f=2$ and
$s=3$. Since we must have $s-1>\min\{e,f\}$, ths only options for $e$
are $e=0$ or $e=1$.
If $e=0$ then, using \eqref{eq:14} and \eqref{eq:15} in \eqref{eq:13}, we obtain
$$
v_3\(\frac {1} {2k+b+2}\binom {2k+b+2} {3b+2}\(\frac {2}
{3}\)^{k-b}\)
=3^{s-2}-k>-k+2=-k+f.
$$
Finally, if $e=1$, then from the visualization of $(3b+2)_3$ we see
that there must be at least 2 carries when adding $3b+2$ and $2k-2b$
(namely when adding the first and second digits).
If we use this together with \eqref{eq:14} in \eqref{eq:13}, the we arrive at
$$
v_3\(\frac {1} {2k+b+2}\binom {2k+b+2} {3b+2}\(\frac {2}
{3}\)^{k-b}\)
=3^{s-2}-1-k-1+2>-k+2=-k+f.
$$

This completes the proof of our claim.


\begin{thebibliography}{}
 
\bibitem[DZ]{DZ}   Dvornicich, R., Zannier, U., 
{\em Solution of a problem about symmetric functions},  
Rocky Mountain J. Math. 33 (2003), no. 4, 1279--1287. 

\bibitem[LP]{LP} Lascoux, A.,  Pragacz, P.,
 {\em  Jacobians of symmetric polynomials},   
 Ann. Comb. 6 (2002), no. 2, 169--172. 
  
 
\bibitem[M]{M}  Macdonald, I. G., {\em Symmetric functions and Hall polynomials}, 
second edition,   Oxford Mathematical Monographs. Oxford Science Publications. The Clarendon Press, Oxford University Press, New York, 1995.

\bibitem[S]{S} Stanley, R. {\em Enumerative combinatorics,  Vol. 1},  Cambridge Studies in Advanced Mathematics, 49. Cambridge University Press, Cambridge, 1997.

  \end{thebibliography}
\end{document}